\theoremstyle{definition}
\theoremstyle{remark}
\newtheorem{tm}{Theorem}[section]
\newtheorem{rk}{Remark}[section]
\newtheorem{ap}{Assumption}[section]
\newtheorem{lm}{Lemma}[section]
\newcommand{\cc}{\mathbb C}
\newcommand{\pp}{\mathbb P}
\newcommand{\nn}{\mathbb N}
\newcommand{\rr}{\mathbb R}
\newcommand{\ttt}{\mathbb T}
\newcommand{\zz}{\mathbb Z}
\newcommand{\LL}{\mathcal L}
\newcommand{\FFF}{\mathscr F}
\newcommand{\<}{\langle}
\renewcommand{\>}{\rangle}
\allowdisplaybreaks \allowdisplaybreaks[4]
\begin{document}

\title[Approximation of Stochastic Semiclassical Schr\"odinger Equation]
{Strong Approximation of Stochastic Semiclassical Schr\"odinger Equation with Multiplicative Noise}

\author{Lihai JI} 
\address{Institute of Applied Physics and Computational Mathematics, Beijing 100094, China, and Shanghai Zhangjiang Institute of Mathematics, Shanghai 201203, China}
\email{jilihai@lsec.cc.ac.cn}

\author{Zhihui LIU}
\address{Department of Mathematics \& National Center for Applied Mathematics Shenzhen (NCAMS) \& Shenzhen International Center for Mathematics, Southern University of Science and Technology, Shenzhen 518055, China}
\email{liuzh3@sustech.edu.cn} 
\thanks{The first author is supported by the National Natural Science Foundation of China (NNSFC), No. 12171047. 
The second author is supported by NNSFC, No. 12101296, Basic and Applied Basic Research Foundation of Guangdong Province, No. 2024A1515012348, and Shenzhen Basic Research Special Project (Natural Science Foundation) Basic Research (General Project), No. JCYJ20220530112814033.} 

\subjclass[2010]{Primary 60H35; 60H15, 65L60}



\keywords{stochastic semiclassical  Schr\"odinger equation,
spectral Galerkin method,
midpoint scheme, strong convergence rate}


\begin{abstract}
We consider the stochastic nonlinear Schr\"odinger equation driven by a multiplicative noise in a semiclassical regime, where the Plank constant is small. In this regime, the solution of the equation exhibits high-frequency oscillations. We design an efficient numerical method combining the spectral Galerkin approximation and the midpoint scheme. This accurately approximates the solution, or at least of the associated physical observables. Furthermore, the strong convergence rate for the proposed scheme is derived, which explicitly depends on the Planck constant. This conclusion implies the semiclassical regime's admissible meshing strategies for obtaining ``correct" physical observables.
\end{abstract}

\maketitle


\section{Introduction}

Many problems of solid state physics require the solution of the following semiclassical nonlinear Schr\"odinger (NLS) equation:
\begin{align}\label{det_sNLS}  
&{ \bf i} \varepsilon\partial_t u_t^{\varepsilon}+\frac{\varepsilon^2}{2}\Delta u_t^{\varepsilon}+F(u_t^\varepsilon)=0, \quad t>0,\quad x\in\mathbb{R}^d, 
\end{align}
where $0<\varepsilon\ll1$ is the scaled Planck constant describing the microscopic and macroscopic scale ratio. Here the solution $\{u_t^{\varepsilon}: t \ge 0\}$ is the electron wave function. It is well known that Eq. \eqref{det_sNLS} propagates oscillations with frequency $1/\varepsilon$ in space and time. In addition, these oscillations pose a huge challenge for the numerical computation. Recently, much progress has been made in this area, such as the time splitting spectral method \cite{BJM2002, BJM2003, JZ2013}, the Gaussian beam method \cite{JWY2011, JY2010}, the Hagedorn wave packet approach \cite{FGL2009, Zhou2014}, and the Frozen Gaussian beam method \cite{Kay2006, LZ2018}. Besides, some efficient, conservative methods are particularly designed for Eq. \eqref{det_sNLS}, see, e.g., \cite{CL22, CZ23}. We refer to the comprehensive review \cite{JMS2011, LL2020} and references therein for more recent studies.

In Eq. \eqref{det_sNLS}, no exterior influence is considered. When it comes to quantum dynamics and studying wave propagation in random media, noise has to be introduced. For instance, in \cite{CMZ2020, JLRZ2011, WL2024}, the semiclassical NLS equations with random initial data and potentials are investigated. In this work, we aim to deepen the understanding of the highly oscillatory behavior of a semiclassical NLS equation with a Wiener process perturbation. To our knowledge, this is the first result of investigating the stochastic semiclassical NLS equation.

Consider the following stochastic semiclassical NLS equation with multiplicative noise on the $d$-dimensional torus $\ttt:=[0,1]^d$:
\begin{align}\label{semi-NLS}  
 &{ \bf i} \varepsilon du_t^{\varepsilon}+(\frac{\varepsilon^2}{2}\Delta u_t^{\varepsilon}+F(u_t^\varepsilon)) dt=G(u_t^\varepsilon)dW_t, \quad (t, x) \in(0,T] \times \ttt, 
\end{align}
 with periodic boundary condition and initial datum $u_0^{\varepsilon}$.
Here $W$ is a $L^2(\ttt;\mathbb{R})$-valued $Q$-Wiener process defined on a complete filtered probability space (see Section \ref{sec2}).
Note that in the case of $\varepsilon\equiv1$, Eq. \eqref{semi-NLS} becomes the classical stochastic NLS equation, which has been studied both theoretically and numerically, see, e.g., \cite{BD2004, CHJ2017, CHLZ2017, CHLZ2019, CLZ2023, CH2016, CHL2017, CHP2016, HWZ2017, HW19, Liu2013} and references therein. 

Designing efficient numerical methods that accurately approximate the solution of Eq. \eqref{semi-NLS}, or at least of the associated physical observables, is a formidable mathematical challenge. This paper investigates the spectral Galerkin approximation and its full discretization via a midpoint scheme for Eq. \eqref{semi-NLS}. It is well known that for the deterministic case, i.e., Eq. \eqref{det_sNLS}, \cite{MPP1999} proved that, for the best combination of the spatial and temporal discretizations, one needs the spatial mesh size $O(\varepsilon)$ and the time step size $O(\varepsilon)$ to guarantee good approximations to all (smooth) observables for sufficiently small $\varepsilon$. Failure to satisfy these conditions leads to wrong numerical observables. This paper attempts to understand the spectral Galerkin approximation's resolution capacity and mesh strategies for the stochastic semiclassical NLS equation \eqref{semi-NLS}. By analyzing the strong convergence rates for the proposed numerical approximation, we obtain the admissible meshing strategies for obtaining ``correct" physical observables in the semiclassical regime.

The rest of the paper is organized as follows.
Section \ref{sec2} introduces proper notations, assumptions, and preliminaries. In Section \ref{sec3}, we give the well-posedness and regularity estimates for the solution of Eq. \eqref{semi-NLS}.
Rigorous strong error estimations for the aforementioned spectral Galerkin approximation and its temporal midpoint full discretization are performed in Sections \ref{sec4} and \ref{sec5}, respectively.

\section{Preliminaries}
\label{sec2}

We first introduce some frequently used notations. 
Let $T\in \rr^*_+=(0,\infty)$ be a fixed terminal time.
For an integer $M\in \nn_+=\{1,2,\cdots\}$, we denote by $\zz_M:=\{0,1,\cdots,M\}$ and $\zz_M^*=\{1,\cdots,M\}$. 
Throughout the paper, we use the notations $L_x^2:=L^2(\ttt;\cc)$, $L_t^p:=L^p([0, T];\cc)$, and $L_\omega^p:=L^p(\Omega;\cc)$, with $p\in [2,\infty]$, to denote the usual Lebesgue spaces over $\cc$, and $H:=L^2(\ttt;\rr)$.
Denote by $A: {\rm Dom}(A)\subset H\rightarrow H$ the periodic Laplacian operator on $H$ with an eigensystem $\{(\lambda_k, e_k)\}_{k=1}^\infty$ where the sequence of eigenvalues $\{\lambda_k\}_{k=1}^\infty$ is listed in an increasing order. 
Let $(-A)^\theta$ be the fractional powers of $-A$, and $\dot H^\theta$ be the domain of $(-A)^{\theta/2}$, with $\theta\in \rr_+$.     
We use $(\LL(H; \dot H^\theta), \|\cdot\|_{\LL(H; \dot H^\theta)})$ to denote the space of bounded linear operators from $H$ to $\dot H^\theta$ for $\theta\in \rr_+$ and $\LL(H):=\LL(H; H)$. 
For convenience, sometimes we use the temporally, sample path, and spatially mixed norm $\|\cdot\|_{L_\omega^p L_t^r \dot H^\theta}$ in different orders, such as
\begin{align*}
\|X\|_{L_\omega^p L_t^r \dot H^\theta}
:=(\int_\Omega (\int_0^T \|X(t,\omega)\|_\theta^r {\rm d}t)^\frac pr \pp({\rm d}\omega))^\frac 1p
\end{align*}
for $X\in L_\omega^p L_t^r \dot H^\theta$, 
with the usual modification for $r=\infty$ or $q=\infty$. 

Then, we introduce the driving stochastic process to study the stochastic semiclassical NLS equation \eqref{semi-NLS}. Let $(\Omega,\FFF,\pp)$ be a probability space with a normal filtration 
$\{\FFF_t\}_{0\leq t\leq T}$.
Let ${\bf Q}\in \LL(H)$ be a self-adjoint and nonnegative definite operator on $H$.
Denote by $U_0:={\bf Q}^{1/2} (H)$ and 
$(\LL_2^\theta:=HS(U_0; \dot H^\theta), \|\cdot\|_{\LL_2^\theta})$ the space of Hilbert--Schmidt operators from $U_0$ to $\dot H^\theta$ for $\theta\in \rr_+$. 
Let $\{W(t):t\in [0,T]\}$ be an $H$-valued ${\bf Q}$-Wiener process in the stochastic basis $(\Omega,\FFF, \{\FFF_t\}_{0\leq t\leq T},\pp)$, i.e., there exists an orthonormal basis $\{g_k\}_{k=1}^\infty$ of $H$ which forms the eigenvectors of $\bf Q$ subject to the eigenvalues $\{q_k\}_{k=1}^\infty$ and a sequence of mutually independent Brownian motions $\{\beta_k\}_{k=1}^\infty $ such that (see \cite[Proposition 2.1.10]{LR15})
\begin{align*}
W(t)
=\sum_{k\in \nn_+} {\bf Q}^{1/2} g_k \beta_k(t)
=\sum_{k\in \nn_+} \sqrt{q_k} g_k \beta_k(t),\quad
0\leq t \leq T.
\end{align*}

We assume that the initial datum $u_0^\epsilon$ is an $\FFF_0$-measurable random variable. 
Our main assumptions on the coefficients of Eq. \eqref{semi-NLS} are the following Lipschitz continuity and linear growth conditions.

\begin{ap} \label{ap-fg}
There exist positive constants $\mu$, $L_1$ and $L_2$ such that 
\begin{align}
\|F(u)-F(v)\| \le L_1 \|u-v\|, \quad & \|F(z)\|_\mu \le L_1 (1+\|z\|_\mu), \label{ap-f}\\[1.5mm]
\|G(u)-G(v)\|_{\LL_2^0} \le L_2 \|u-v\|, \quad & \|G(z)\|_{\LL_2^\mu} \le L_2 (1+\|z\|_\mu),  \label{ap-g}
\end{align}
for all $u,v \in H$ and $z \in \dot H^\mu$.
\end{ap}

\begin{rk}
Let $F$ and $G$ denote the Nymiskii operators associated to some functions $f: \ttt \times \cc \to \cc$ and $g: \cc \to \cc$, respectively:
\begin{align*}
F(u)(x):&=f(x, u(x)),  \quad u\in L_x^2,\ x\in \ttt, \\[1.5mm]
G(u) v(x):&=g(u(x)) v(x),  \quad u\in L_x^2,\ v\in U_0,\ x\in \ttt.
\end{align*}  
A sufficient condition such that the condition \eqref{ap-f} holds with $\mu=1$ is that $f$ possesses bounded partial derivatives. 
Indeed, suppose that $|\partial_1 f(x,y)| \le L_1^f$ and $|\partial_2 f(x,y)| \le L_2^f$ for some constants $L_1^f,L^f_2 \ge 0$, then for any $u,v \in H$ and $z \in \dot H^1$,
\begin{align*}
\|F(u)-F(v)\|^2 =\int_{\ttt} |f(x,u(x))-f(x, v(x))|^2 dx \le (L_2^f)^2 \|u-v\|^2, 
\end{align*}
and
\begin{align*}
\|F(z)\|_1^2&=\|F(z)\|^2+\|D F(z)\|^2 \\[1.5mm]
& \le (\|F(z)-F(0)\| +\|F(0)\|)^2
+ \int_{\ttt} |\partial_1 f(x,z(x))+\partial_2 f(x, z(x)) z'(x)|^2 dx \\[1.5mm]
& \le C (1+L^f_1)^2 + 2 (L_2^f)^2 \|z\|_1^2,
\end{align*}
which imply the condition \eqref{ap-f} with $\mu=1$ and certain $L_1>0$ depending on $L^f_1$ and $L^f_2$.
Here and in the rest of the paper, we use $C$ to denote a generic positive constant independent of the discrete parameters that would differ in each appearance. 

A sufficient condition such that the condition \eqref{ap-g} holds with $\mu=1$ is that $g$ is continuously differentiable with bounded derivative and $g_k\in W^{1,\infty}$ (i.e., $g_k$ and its derivative are essentially bounded on $\ttt$) for each $k\in \nn_+$ such that $\sum_{k\in \nn_+} q_k \|g_k\|^2_{W^{1,\infty}}<\infty$ (see \cite[Example 5.1]{LQ21}).
In fact, for any $u,v\in H$ and $z\in \dot H^1$, we have
\begin{align*} 
\|G(u)-G(v)\|^2_{\LL_2^0} 
&=\sum_{k\in \nn_+} q_k \|(g(u)-g(v)) g_k\|^2
\le \|g'\|_\infty^2 \sum_{k\in \nn_+} q_k \|g_k\|_{L_x^\infty}^2 \|u-v\|^2, 
\end{align*}
and
\begin{align*} 
\|G(z) \|^2_{\LL_2^1} 
&\le C \sum_{k\in \nn_+} q_k 
(\|g(z) g_k\|^2+\|g'(z) z' g_k+g(z) g'_k\|^2)\\[1.5mm] 
&\le C(1+\|g'\|_\infty^2) \sum_{k\in \nn_+} q_k 
\|g_k\|_{W_x^{1,\infty}}^2(1+\|z\|_1)^2,
\end{align*}
which show the condition \eqref{ap-g} with $\mu=1$ and certain $L_2>0$ depending on $\|g'\|_\infty$ and $\sum_{k\in \nn_+} q_k \|g_k\|^2_{W^{1,\infty}}$.
\end{rk}

\begin{rk}
In the case of additive noise, e.g., for $G={\rm Id}$, the identity operator on $L_x^2$, the condition \eqref{ap-g} is equivalent to 
\begin{align}\label{con-add}
L_2:=\|(-\Delta)^{\mu/2} Q^{1/2}\|_{HS}<\infty. 
\end{align} 
\end{rk}

\section{Well-posedness and Moment's Estimates}
\label{sec3}

Under Assumption \ref{ap-fg}, as the coefficients of Eq. \eqref{semi-NLS} is Lipschitz continuous in $H$ and grow linearly in $\dot H^\mu$, we have seen that its $L_\omega^p \dot H^\mu$-well-posedness is straightforward (see \cite[Theorem 2.3]{HL19}).
More precisely, Eq. \eqref{semi-NLS} has a unique mild solution given by
\begin{align}\label{mild}
u_t^\varepsilon=S_t^\varepsilon u^\varepsilon_0+{\bf i} \varepsilon^{-1}\int_0^tS_{t-r}^\varepsilon F(u_r^\varepsilon)dr-{\bf i} \varepsilon^{-1}\int_0^tS_{t-r}^\varepsilon G(u_r^\varepsilon)dW_r,
\end{align}
where $S_t^\varepsilon:=\exp(\frac{{ \bf i}\varepsilon}{2}t\Delta)$, $t \in [0, T]$, denotes the semigroup generated by ${\bf i}\varepsilon\Delta/2$.

In the following, we will present the moment's estimates of the exact solution to Eq. \eqref{semi-NLS} or Eq. \eqref{mild}, which would be valuable in the derivation of error estimations between the exact solution of Eq. \eqref{semi-NLS} and the numerical ones in Sections \ref{sec4} and \ref{sec5}.
If there is a damping term, this moment's estimate could be time-uniform.

\begin{tm} \label{tm-well}
Let $p \ge 2$, $\mu>0$, $u_0^\epsilon \in L^p(\Omega; \dot H^\mu)$, and Assumption \ref{ap-fg} hold.
Then Eq. \eqref{semi-NLS} possesses a unique solution $\{u(t):t \ge 0\}$ and there exists a positive constant $C$ such that  
\begin{align}\label{reg}
\sup_{t \in [0, T]} \|u_t^\varepsilon\|_{L_\omega^p \dot H^\mu} 
\le C e^{CT \varepsilon^{-2}} (1+ \|u^\varepsilon_0\|_{L_\omega^p \dot H^\mu}),
\end{align}  
and for any $t,s \in [0, T]$,
\begin{align}\label{hol}
\|u_t^\varepsilon-u_s^\varepsilon\|_{L_\omega^p L_x^2}
\le C e^{CT \varepsilon^{-2}} (1+ \|u^\varepsilon_0\|_{L_\omega^p \dot H^\mu}) |t-s|^{1/2}.
\end{align}  
In particular if $F(u)={ \bf i} \alpha u$ with $\alpha>0$ and $G={\rm Id}$ such that \eqref{con-add} holds, then there exists a positive constant $C$ such that for any $t\in[0,T]$, 
\begin{align} \label{reg+} 
\|u_t^{\varepsilon}\|_{L_\omega^p \dot H^\mu}
& \le e^{-\alpha t \varepsilon^{-1}} \|u^\varepsilon_0\|_{L_\omega^p \dot H^\mu}
+C (\alpha\varepsilon)^{-1/2}, 
\end{align} 
and for any $t,s \in [0, T]$,
\begin{align}\label{hol+}
\|u_t^\varepsilon-u_s^\varepsilon\|_{L_\omega^p L_x^2}
\le C(e^{-\alpha t \varepsilon^{-1}} \|u^\varepsilon_0\|_{L_\omega^p \dot H^\mu}
+ (\alpha\varepsilon)^{-1/2}) |t-s|^{1/2}.
\end{align}  
\end{tm}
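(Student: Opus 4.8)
The plan is to read all four estimates off the mild formula \eqref{mild}, exploiting that the free propagator $S^\varepsilon_t=\exp(\tfrac{{\bf i}\varepsilon}{2}t\Delta)$ is generated by the skew-adjoint operator ${\bf i}\varepsilon\Delta/2$, hence forms a group of unitary operators on $L_x^2$ that commutes with every fractional power $(-A)^{\theta/2}$; consequently $\|S^\varepsilon_t v\|_\theta=\|v\|_\theta$ for $v\in\dot H^\theta$ and $\|S^\varepsilon_t\Phi\|_{\LL_2^\theta}=\|\Phi\|_{\LL_2^\theta}$ for $\Phi\in\LL_2^\theta$, any $\theta\ge0$. Existence and uniqueness of the $L_\omega^p\dot H^\mu$-valued mild solution is the standard Banach fixed-point argument in $L^\infty([0,T];L_\omega^p\dot H^\mu)$ based on the Lipschitz bounds in \eqref{ap-f}--\eqref{ap-g}, as recorded in \cite[Theorem 2.3]{HL19}; so the content lies in the three a priori bounds.

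For \eqref{reg}, apply $\|\cdot\|_\mu$ and then $\|\cdot\|_{L_\omega^p}$ to \eqref{mild}. The propagator disappears by the isometry property; Minkowski's integral inequality together with the linear growth of $F$ in \eqref{ap-f} bounds the drift term by $\varepsilon^{-1}L_1\int_0^t(1+\|u_r^\varepsilon\|_{L_\omega^p\dot H^\mu})\,dr$, while the Burkholder--Davis--Gundy inequality, Minkowski's inequality, the fact that $S^\varepsilon$ is an isometry on $\LL_2^\mu$, and the linear growth of $G$ in \eqref{ap-g} bound the stochastic term by $C_p\varepsilon^{-1}L_2\bigl(\int_0^t(1+\|u_r^\varepsilon\|_{L_\omega^p\dot H^\mu})^2\,dr\bigr)^{1/2}$. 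Setting $\psi(t):=\|u_t^\varepsilon\|_{L_\omega^p\dot H^\mu}$, squaring and applying Cauchy--Schwarz in time to the drift contribution, one gets $(1+\psi(t))^2\le C(1+\psi(0))^2+C\varepsilon^{-2}\int_0^t(1+\psi(r))^2\,dr$, and Gronwall's lemma gives \eqref{reg}. The $\varepsilon^{-2}$ in the exponent is precisely what the squared drift (via Cauchy--Schwarz in time) and the squared diffusion (via the It\^o isometry) each produce from the $\varepsilon^{-1}$ prefactor. To be rigorous one runs the estimate on the Picard iterates (or under a localizing stopping time) and then passes to the limit.

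For \eqref{hol} take $s<t$, subtract, and split the drift and the diffusion contributions as $\int_s^t S^\varepsilon_{t-r}(\cdots)$ plus $(S^\varepsilon_{t-s}-I)\int_0^s S^\varepsilon_{s-r}(\cdots)$, and the initial datum as $(S^\varepsilon_{t-s}-I)S^\varepsilon_s u^\varepsilon_0$. The ``increment'' pieces are estimated directly: the drift one by $\varepsilon^{-1}L_1(t-s)\bigl(1+\sup_{r}\|u^\varepsilon_r\|_{L_\omega^p\dot H^\mu}\bigr)$ and the stochastic one, by BDG and the It\^o isometry, by $C_p\varepsilon^{-1}L_2(t-s)^{1/2}\bigl(1+\sup_{r}\|u^\varepsilon_r\|_{L_\omega^p\dot H^\mu}\bigr)$, both of which are $\le Ce^{CT\varepsilon^{-2}}(1+\|u^\varepsilon_0\|_{L_\omega^p\dot H^\mu})(t-s)^{1/2}$ after invoking \eqref{reg} and $t-s\le T^{1/2}(t-s)^{1/2}$. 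The ``shift'' pieces use the elementary smoothing estimate $\|(S^\varepsilon_\tau-I)v\|\le C(\varepsilon\tau)^{1/2}\|v\|_1$ --- from $|e^{{\bf i}x}-1|\le C|x|^{1/2}$ applied mode by mode --- together with $\|v\|_1\le C\|v\|_\mu$ for $\mu\ge1$ and again \eqref{reg}; collecting terms and absorbing the harmless powers $\varepsilon^{\pm1/2}$ into the exponential yields \eqref{hol}. (For $\mu\in(0,1)$ the same scheme works with the H\"older exponent degraded to $\mu/2$; the stated rate is the one relevant in the Remark, where $\mu=1$.)

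For \eqref{reg+}--\eqref{hol+}, with $F(u)={\bf i}\alpha u$ and $G=\mathrm{Id}$ the equation is linear: dividing \eqref{semi-NLS} by ${\bf i}\varepsilon$ turns the generator into ${\bf i}\tfrac{\varepsilon}{2}\Delta-\alpha\varepsilon^{-1}I$, whose semigroup is $T^\varepsilon_\tau:=e^{-\alpha\varepsilon^{-1}\tau}S^\varepsilon_\tau$ --- still an isometry on $\dot H^\mu$ up to the scalar $e^{-\alpha\varepsilon^{-1}\tau}$ --- so that $u_t^\varepsilon=T^\varepsilon_t u^\varepsilon_0-{\bf i}\varepsilon^{-1}\int_0^t T^\varepsilon_{t-r}\,dW_r$. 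Taking $\|\cdot\|_{L_\omega^p\dot H^\mu}$, the deterministic part equals $e^{-\alpha\varepsilon^{-1}t}\|u^\varepsilon_0\|_{L_\omega^p\dot H^\mu}$; the stochastic convolution is Gaussian, so all its $L_\omega^p$-moments are comparable to the second one, which by the It\^o isometry and \eqref{con-add} is $L_2\bigl(\int_0^t e^{-2\alpha\varepsilon^{-1}(t-r)}\,dr\bigr)^{1/2}\le L_2(\varepsilon/2\alpha)^{1/2}$; the $\varepsilon^{-1}$ prefactor then collapses this to $C(\alpha\varepsilon)^{-1/2}$, giving \eqref{reg+}. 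For \eqref{hol+} write $u_t^\varepsilon-u_s^\varepsilon=(T^\varepsilon_{t-s}-I)u_s^\varepsilon-{\bf i}\varepsilon^{-1}\int_s^t T^\varepsilon_{t-r}\,dW_r$ ($s<t$), bound the stochastic term as above (its variance, even with the weight, is $\le L_2^2(t-s)$), and bound $\|(T^\varepsilon_{t-s}-I)v\|\le C(t-s)^{1/2}\bigl((\alpha\varepsilon^{-1})^{1/2}\|v\|+\varepsilon^{1/2}\|v\|_1\bigr)$ --- from $|e^z-1|\le C|z|^{1/2}$ for $\mathrm{Re}\,z\le0$ applied mode by mode --- and insert the already-proven \eqref{reg+} for $\|u_s^\varepsilon\|_{L_\omega^p\dot H^\mu}$. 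The main obstacle throughout is the bookkeeping of the $\varepsilon$-dependence: one must make sure the $\varepsilon^{-1}$ prefactor of \eqref{mild} is squared only where Cauchy--Schwarz in time and the It\^o isometry force it to be, so that Gronwall delivers the exponent $\varepsilon^{-2}$ rather than something larger, and in the linear case one must keep the constants sharp enough for $\varepsilon^{-1}\cdot(\varepsilon/\alpha)^{1/2}$ to collapse exactly to $(\alpha\varepsilon)^{-1/2}$.
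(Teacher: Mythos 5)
Your proposal is correct and follows essentially the same route as the paper: the mild formulation, the isometry of $S_t^\varepsilon$ (resp.\ of $e^{-\alpha t/\varepsilon}S_t^\varepsilon$) on $\dot H^\mu$ and $\LL_2^\mu$, Minkowski plus Burkholder--Davis--Gundy, squaring and Gronwall for \eqref{reg} and \eqref{reg+}, and the standard three-term splitting with the smoothing bound $\|(S_\tau^\varepsilon-{\rm Id})v\|\le C(\varepsilon\tau)^{\rho}\|v\|_{2\rho}$ for the H\"older estimates (your ``shift'' pieces recombine, via the mild formula, into exactly the paper's term $I=\|(S^\varepsilon_{t-s}-{\rm Id})u_s^\varepsilon\|$). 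Your explicit caveat that the $|t-s|^{1/2}$ rate in \eqref{hol} requires $\mu\ge1$ (degrading to $|t-s|^{\mu/2}$ otherwise) is a point the paper's own proof glosses over, and is worth keeping.
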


\begin{proof} 
By the Minkowski inequality and the Burkholder--Davis--Gundy inequality, we have
\begin{equation*}
\begin{split}
\|u_t^{\varepsilon}\|_{L_\omega^p \dot H^\mu}
& \le \|S_t^\varepsilon u^\varepsilon_0\|_{L_\omega^p \dot H^\mu}
+ \varepsilon^{-1} \int_0^t \|S_{t-r}^{\varepsilon} F(u_r^{\varepsilon})\|_{L_\omega^p \dot H^\mu}dr \\[1.5mm]
& \quad +  \varepsilon^{-1} (\int_0^t \|S_{t-r}^{\varepsilon} G(u_r^{\varepsilon})\|^2_{L_\omega^p \dot H^\mu}dr )^{1/2}. 
\end{split}
\end{equation*}
Using the equality \eqref{s} in Appendix with $\mu\equiv0$ and Assumption \ref{ap-fg}, we obtain
\begin{align*}
\|u_t^{\varepsilon}\|_{L_\omega^p \dot H^\mu}
& \le \|u^\varepsilon_0\|_{L_\omega^p \dot H^\mu}
+ L_1 \varepsilon^{-1} \int_0^t (1+\|u_r^{\varepsilon}\|_{L_\omega^p \dot H^\mu}) dr\\[1.5mm]
& \quad + L_2 \varepsilon^{-1} (\int_0^t (1+\|u_r^{\varepsilon}\|_{L_\omega^p \dot H^\mu})^2 dr )^{1/2}, 
\end{align*} 
which shows that  
\begin{align*}
\|u_t^{\varepsilon}\|^2_{L_\omega^p \dot H^\mu}
& \le 3 \|u^\varepsilon_0\|^2_{L_\omega^p \dot H^\mu}
+ 3 \varepsilon^{-2}(L^2_1  t+L_2^2) \int_0^t (1+\|u_r^{\varepsilon}\|_{L_\omega^p \dot H^\mu})^2 dr  \\[1.5mm]
& \le C T \varepsilon^{-2} \|u^\varepsilon_0\|^2_{L_\omega^p \dot H^\mu}
+ C T \varepsilon^{-2} \int_0^t \|u_r^{\varepsilon}\|_{L_\omega^p \dot H^\mu}^2 dr.
\end{align*} 
We conclude \eqref{reg} by the Gronwall inequality.

To show the H\"older continuous of the solution $u_t^\varepsilon$, we suppose that $0 \le s<t \le T$ without loss of generality.
According to the mild formulation \eqref{mild} and the Minkowski inequality, we get 
\begin{align*}
\|u_t^\varepsilon-u_s^\varepsilon\|_{L_\omega^p L_x^2} \le I+II+III,
\end{align*}
where
\begin{align*}
I   &:=\|(S^\varepsilon_{t-s}-{\rm Id})u^\varepsilon_s\|_{L_\omega^p L_x^2}, \\[1.5mm]
II  &:=\varepsilon^{-1} \|\int_s^t S^\varepsilon_{t-r} F(u^\varepsilon_r)dr \|_{L_\omega^p L_x^2}, \\[1.5mm]
III &:=\varepsilon^{-1} \|\int_s^t S^\varepsilon_{t-r} G(u^\varepsilon_r)dW(r) \|_{L_\omega^p L_x^2}.
\end{align*}
The smoothing property \eqref{s-con} in Appendix and the estimation \eqref{reg} yield that 
\begin{align} \label{I}
I & \le C \varepsilon^{\mu/2} t^{\mu/2} e^{C t \varepsilon^{-2}} (1+ \|u^\varepsilon_0\|_{L_\omega^p \dot H^\mu})(t-s)^{\mu/2}.
\end{align}

For the second term $II$, by the Minkowski inequality, the isometry property \eqref{s} with $\alpha=0$, the condition \eqref{ap-f}, and the estimation \eqref{reg}, we have
\begin{align} \label{II} 
II & \le \int_s^t \|F(u_r)\|_{L_\omega^p L_x^2} dr \nonumber \\
&\le L_1 \int_s^t (1+\|u_r\|_{L_\omega^p \dot H^\mu}) dr \nonumber  \\
& \le C e^{C T \varepsilon^{-2}} (1+ \|u^\varepsilon_0\|_{L_\omega^p \dot H^\mu})(t-s).
\end{align} 
For the last term $III$, by the Burkholder--Davis--Gundy inequality, the isometry property \eqref{s} with $\alpha=0$, the condition \eqref{ap-g}, and the estimation \eqref{reg}, 
\begin{align} \label{III}
III &\le (\int_s^t \|G(u_r)\|^2_{L_\omega^p \LL_2^0} {\rm d}r )^\frac12 
\le C e^{C T \varepsilon^{-2}}  (1+ \|u^\varepsilon_0\|_{L_\omega^p \dot H^\mu}) (t-s)^\frac12.
\end{align}
Combining the above estimations \eqref{I}, \eqref{II}, and \eqref{III}, we conclude \eqref{hol}. 

Now assume that $F(u)={ \bf i} \alpha u$ and $G={\rm Id}$. 
Due to \eqref{s} and the condition \eqref{con-add}, we obtain
\begin{align*}
\|u_t^{\varepsilon}\|_{L_\omega^p \dot H^\mu}
& \le e^{-\alpha t \varepsilon^{-1}} \|u^\varepsilon_0\|_{L_\omega^p \dot H^\mu}
+ C \varepsilon^{-1} (\int_0^t e^{-2 \alpha r/\varepsilon}\|S_r\|^2_{\LL_2^\mu} dr)^{1/2} \\[1.5mm]
& = e^{-\alpha t \varepsilon^{-1}} \|u^\varepsilon_0\|_{L_\omega^p \dot H^\mu}
+ C L_2 \varepsilon^{-1} (\int_0^t e^{-2 \alpha r/\varepsilon} dr)^{1/2} \\[1.5mm]
& = e^{-\alpha t \varepsilon^{-1}} \|u^\varepsilon_0\|_{L_\omega^p \dot H^\mu}
+ C (\alpha\varepsilon)^{-1/2}.
\end{align*}  
This shows \eqref{reg+}, and we can conclude \eqref{hol+} by the previous argument. The proof is complete.
\end{proof}

\section{Spatial Semi-discretization}
\label{sec4}

In the previous section, we derive the uniform boundedness and the H\"older continuous of the solution to Eq. \eqref{semi-NLS}. 
In this section, we apply the spectral Galerkin method to discretize Eq. \eqref{semi-NLS} spatially to obtain a valid approximation. 

In the sequel, let $N\in \nn_+$ and define the finite dimensional subspace $V_N:={\rm span}\{e_1,e_2,\cdots,e_N\}$.
Then the corresponding numerical solution $\{u^N:=u^{\varepsilon,N}\}$, $N\in\mathbb{N}$, of Eq. \eqref{semi-NLS} satisfies
\begin{align}\label{spe}
\begin{split}
&{ \bf i} \varepsilon du_t^{\varepsilon,N}+(\frac{\varepsilon^2}{2}\Delta u_t^{\varepsilon,N}+P_NF(u_t^{\varepsilon,N}))dt=P_N G(u_t^{\varepsilon,N})dW_t, \quad t \in (0, T], \\[1.5mm]
&u^{\varepsilon,N}_0=P_Nu^\varepsilon_0.
\end{split}
\end{align}
Similarly to Eq. \eqref{mild}, we have 
\begin{align}\label{mild-N}
u_t^{\varepsilon,N}=S_t^N u^\varepsilon_0
+{\bf i} \varepsilon^{-1}\int_0^t S_{t-r}^{\varepsilon,N} F(u_r^{\varepsilon,N})dr-{\bf i} \varepsilon^{-1}\int_0^t S_{t-r}^{\varepsilon,N} G(u_r^{\varepsilon,N})dW_r,
\end{align}
where $S_t^N:=P_NS_t^\varepsilon=\exp(\frac{{ \bf i}\varepsilon}{2}t\Delta_N)$, $t \in [0, T]$.

Denote by $e^N_t:=u_t^\varepsilon-u_t^{\varepsilon,N}$, $t \in [0, T]$.
Then subtracting Eq. \eqref{mild-N} from Eq. \eqref{mild} yields that
\begin{align*}
\begin{split}
e^N_t=&(S_t^\varepsilon-S_t^N) u^\varepsilon_0
+{\bf i} \varepsilon^{-1}\int_0^t [S_{t-r}^\varepsilon F(u_r^\varepsilon)-S_{t-r}^{\varepsilon}P_NF(u_r^{\varepsilon,N})] dr\\[1mm]
&-{\bf i} \varepsilon^{-1}\int_0^t [S_{t-r}^\varepsilon G(u_r^\varepsilon)-S_{t-r}^{\varepsilon}P_N G(u^{\varepsilon,N}_r)] dW_r\\[1.5mm]
&:=I^N+II^N+III^N.
\end{split}
\end{align*} 
For any $t \in [0, T]$ and $\alpha>0$, we define the following three operators:
\begin{align*} 
S_t^{\alpha,\varepsilon}:=&\exp((\frac{{ \bf i}\varepsilon}{2}\Delta-\frac\alpha \varepsilon {\rm Id}) t)=e^{-\alpha /\varepsilon t} S_t^\varepsilon,\\ 
S_t^{\alpha,\varepsilon, N}:=&P_N S_t^{\alpha,\varepsilon}=\exp((\frac{{ \bf i}\varepsilon}{2}\Delta_N-\frac\alpha \varepsilon {\rm Id}) t), \\
e^{\alpha, N}_t:= & u_t^{\alpha,\varepsilon}-u_t^{\alpha,\varepsilon,N}. 
\end{align*}  

Using the semigroup theory, we can derive the following error estimate between the spectral Galerkin approximation \eqref{spe} and the exact solution of Eq. \eqref{semi-NLS}.


\begin{tm} \label{tm-en}
Let $p \ge 2$, $\mu>0$, $u_0^\epsilon \in L^p(\Omega; \dot H^\mu)$, and Assumption \ref{ap-fg} hold.
Then there exists a positive constant $C$ such that
\begin{align} \label{en}
&\sup_{0\le t\le T} \|e^N_t\|_{L_\omega^p L_x^2}
\le C e^{CT \varepsilon^{-2}}
(1+ \|u^\varepsilon_0\|_{L_\omega^p \dot H^\mu})N^{-\mu/d}. 
\end{align} 	
Particularly, if $F(u)={ \bf i} \alpha u$ with $\alpha>0$ and $G={\rm Id}$ such that \eqref{con-add} holds, then   
\begin{align} \label{en+} 
\|e^N_t\|_{L_\omega^p L_x^2}
\le (e^{-\alpha t \varepsilon^{-1}} \|u^\varepsilon_0\|_{L_\omega^p \dot H^\mu}
+C (\alpha \varepsilon)^{-1/2}) N^{-\mu/d}.
\end{align} 	
\end{tm}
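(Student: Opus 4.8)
The plan is to estimate the three terms $I^N$, $II^N$, $III^N$ in the mild error decomposition separately, combine them with a Gronwall argument, and then specialize to the damped linear case. First I would handle $I^N=(S_t^\varepsilon-S_t^N)u_0^\varepsilon=(\mathrm{Id}-P_N)S_t^\varepsilon u_0^\varepsilon$. Since $S_t^\varepsilon$ commutes with $A$ and is an $L_x^2$-isometry on each eigenspace, $\|(\mathrm{Id}-P_N)S_t^\varepsilon u_0^\varepsilon\|_{L_\omega^p L_x^2}\le \lambda_{N+1}^{-\mu/2}\|u_0^\varepsilon\|_{L_\omega^p\dot H^\mu}$, and by Weyl's law $\lambda_{N+1}\sim N^{2/d}$ on the $d$-torus, this gives the desired $N^{-\mu/d}$ factor with no exponential blow-up. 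The subtle point here is that one actually wants to control $\|(\mathrm{Id}-P_N)u_t^\varepsilon\|$ rather than just the linear part; so I would instead write $e_t^N = (\mathrm{Id}-P_N)u_t^\varepsilon + (P_N u_t^\varepsilon - u_t^{\varepsilon,N})$ and note the first piece is bounded by $N^{-\mu/d}\sup_t\|u_t^\varepsilon\|_{L_\omega^p\dot H^\mu}$ using the regularity estimate \eqref{reg}, which already carries the $e^{CT\varepsilon^{-2}}$ factor; the second piece then solves an equation driven by $e_t^N$ itself.

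For the remaining two terms I would split, e.g., $S_{t-r}^\varepsilon F(u_r^\varepsilon)-S_{t-r}^\varepsilon P_N F(u_r^{\varepsilon,N}) = S_{t-r}^\varepsilon(\mathrm{Id}-P_N)F(u_r^\varepsilon) + S_{t-r}^{\varepsilon,N}(F(u_r^\varepsilon)-F(u_r^{\varepsilon,N}))$, and similarly for the noise term. The projection-error pieces are bounded by $\lambda_{N+1}^{-\mu/2}\|F(u_r^\varepsilon)\|_{L_\omega^p\dot H^\mu}$ and $\lambda_{N+1}^{-\mu/2}\|G(u_r^\varepsilon)\|_{L_\omega^p\LL_2^\mu}$ respectively, which by Assumption \ref{ap-fg} and \eqref{reg} contribute $C\varepsilon^{-1}e^{CT\varepsilon^{-2}}(1+\|u_0^\varepsilon\|_{L_\omega^p\dot H^\mu})N^{-\mu/d}$ after the $\varepsilon^{-1}$ prefactors and the time/BDG integrations. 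The Lipschitz pieces are bounded using \eqref{ap-f}, \eqref{ap-g}, the isometry property \eqref{s} with $\alpha=0$, the Minkowski inequality for $II^N$ and the Burkholder--Davis--Gundy inequality for $III^N$, yielding $C\varepsilon^{-1}(\int_0^t \|e_r^N\|_{L_\omega^p L_x^2}^2\,dr)^{1/2}$ (plus a non-square-root term from $II^N$ that is handled the same way after squaring). Squaring the whole inequality and applying the integral Gronwall inequality then absorbs the $\varepsilon^{-2}$ into the exponential and gives \eqref{en}.

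For the damped linear case $F(u)=\mathbf{i}\alpha u$, $G=\mathrm{Id}$, I would redo the decomposition with the damped semigroups $S_t^{\alpha,\varepsilon}$ and $S_t^{\alpha,\varepsilon,N}$: now $u_t^\varepsilon$ and $u_t^{\varepsilon,N}$ satisfy linear equations, so the Lipschitz/nonlinear coupling disappears and one gets $e_t^N$ in closed form. The term $(\mathrm{Id}-P_N)S_t^{\alpha,\varepsilon}u_0^\varepsilon$ gives $e^{-\alpha t\varepsilon^{-1}}\lambda_{N+1}^{-\mu/2}\|u_0^\varepsilon\|_{L_\omega^p\dot H^\mu}$, and the stochastic convolution of $(\mathrm{Id}-P_N)$ against the damped semigroup gives, via the isometry \eqref{s} and the integral $\int_0^t e^{-2\alpha r/\varepsilon}\,dr\le \varepsilon/(2\alpha)$, a contribution $C\varepsilon^{-1}\lambda_{N+1}^{-\mu/2}(\varepsilon/\alpha)^{1/2}\|(-\Delta)^{\mu/2}Q^{1/2}\|_{HS}\cdot$(constant)$=C(\alpha\varepsilon)^{-1/2}N^{-\mu/d}$, exactly mirroring the proof of \eqref{reg+} in Theorem \ref{tm-well}. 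Combining the two pieces gives \eqref{en+}.

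The main obstacle I anticipate is bookkeeping the $\varepsilon^{-1}$ and $\varepsilon^{-2}$ factors consistently so that the Gronwall step in the general case genuinely produces $e^{CT\varepsilon^{-2}}$ and not something worse, and—more delicately—verifying in the damped case that \emph{no} factor $\varepsilon^{-1}$ survives outside the $(\alpha\varepsilon)^{-1/2}$: this requires that the $\varepsilon^{-1}$ in front of the stochastic integral be exactly cancelled by the $\varepsilon^{1/2}$ coming from $\int_0^\infty e^{-2\alpha r/\varepsilon}\,dr$ together with the fact that $\|S_r^{\alpha,\varepsilon,N}(\mathrm{Id}-P_N)\|_{\LL_2^0}\le \lambda_{N+1}^{-\mu/2}\|\mathrm{Id}\|_{\LL_2^\mu} e^{-\alpha r/\varepsilon}$ is time-integrable independently of $\varepsilon$ up to that single $\varepsilon^{1/2}$. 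Everything else is routine application of the tools already assembled in Section \ref{sec3} and the Appendix.
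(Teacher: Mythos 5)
Your proposal is correct and follows essentially the same route as the paper: the same mild-form decomposition into $I^N$, $II^N$, $III^N$, the same further splitting of the drift and noise integrands into a projection-error part (controlled by $\lambda_{N+1}^{-\mu/2}\sim N^{-\mu/d}$ together with the regularity estimate \eqref{reg}) and a Lipschitz part absorbed by Gronwall after squaring, and the same closed-form treatment of the damped linear case, where $\int_0^t e^{-2\alpha r/\varepsilon}\,dr\le \varepsilon/(2\alpha)$ cancels the $\varepsilon^{-1}$ prefactor down to $(\alpha\varepsilon)^{-1/2}$. The alternative decomposition $e^N_t=(\mathrm{Id}-P_N)u^\varepsilon_t+(P_Nu^\varepsilon_t-u^{\varepsilon,N}_t)$ you float in the first paragraph is not needed: the estimates you actually carry out afterwards are exactly those of the paper.
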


\begin{proof}
For the first term $I^N$, we use the inequality \eqref{s-sn} in Appendix to get 
\begin{align*} 
\|(S_t^\varepsilon-S_t^{\varepsilon, N}) u^\varepsilon_0\|^2 
\le \lambda_N^{-\mu}\|u^\varepsilon_0\|_\mu^2,
\end{align*} 
which implies
\begin{align}\label{term_I} 
\|I^N\|_{L_\omega^p L_x^2}
 \le C \lambda_N^{-\mu/2} \|u^\varepsilon_0\|_{L_\omega^p \dot H^\mu}. 
\end{align}

For the second term $II^N$, by the Minkovski inequality, the estimate \eqref{s-sn}, the boundedness of $S_t^\varepsilon P_N$, and the condition \eqref{ap-f}, we have
\begin{equation} \label{term_II}
\begin{split}
\|II^N\|_{L_\omega^p L_x^2}
&\le \varepsilon^{-1}\int_0^t\|(S_{t-r}^\varepsilon-S_{t-r}^{\varepsilon, N})F(u_r^\varepsilon)\|_{L_\omega^p L_x^2}dr\\[1.5mm]
& \quad +\varepsilon^{-1}\int_0^t\|S_{t-r}^{\varepsilon, N}(F(u_r^\varepsilon)-F(u_r^{\varepsilon,N}))\|_{L_\omega^p L_x^2}dr\\[1.5mm]
& \le C\varepsilon^{-1} \lambda_N^{-\mu/2} \int_0^t\|F(u_r^\varepsilon)\|_{L^p(\Omega; \dot{H}^\mu)} dr\\[1.5mm]
& \quad + C \varepsilon^{-1}\int_0^t\|F(u_r^\varepsilon)-F(u_r^{\varepsilon,N})\|_{L_\omega^p L_x^2}dr\\[1.5mm]
& \le C\varepsilon^{-1} \lambda_N^{-\mu/2} (1+\|u^\varepsilon_0\|_{L_\omega^p \dot H^\mu})+C \varepsilon^{-1}\int_0^t\|e^N_r\|_{L_\omega^p L_x^2}dr. 
\end{split}
\end{equation}

For the last term $III^N$, it follows from the Minkovski inequality, the Burkholder--Davis--Gundy inequality, and the condition \eqref{ap-f} that
\begin{equation}\label{term_III} 
\begin{split}
\|III^N\|_{L_\omega^p L_x^2}
& \le \varepsilon^{-1}\left\|\int_0^t(S_{t-r}^\varepsilon-S_{t-r}^{\varepsilon}P_N)G(u_r^\varepsilon) dW_r\right\|_{L_\omega^p L_x^2} \\[1.5mm]
& \quad +\varepsilon^{-1}\left\|\int_0^tS_{t-r}^\varepsilon P_N G(u_r^\varepsilon)-G(u_r^{\varepsilon,N}))dW_r\right\|_{L_\omega^p L_x^2}\\[1.5mm]
&\le C \varepsilon^{-1} \lambda_N^{-\mu/2} (\int_0^t (1+\|u_r^\varepsilon\|^2_{L^p(\Omega; \dot{H}^\mu)}) dr )^{1/2}\\[1.5mm]
& \quad +C \varepsilon^{-1} (\int_0^t\|e^N_r\|^2_{L_\omega^p L_x^2} dr )^{1/2}\\[1.5mm]
&\le C \varepsilon^{-1} \lambda_N^{-\mu/2} (1+\|u^\varepsilon_0\|_{L_\omega^p \dot H^\mu}) +C \varepsilon^{-1} (\int_0^t \|e^N_r\|^2_{L_\omega^p L_x^2} dr )^{1/2}.
\end{split}
\end{equation}
Combing \eqref{term_I}, \eqref{term_II}, and \eqref{term_III}, we obtain
\begin{align*}
\|e^N_t\|_{L_\omega^p L_x^2}
& \le C \varepsilon^{-1} \lambda_N^{-\mu/2} (1+\|u^\varepsilon_0\|_{L_\omega^p \dot H^\mu}) 
+C \varepsilon^{-1}\int_0^t\|e^N_r\|_{L_\omega^p L_x^2}dr\\[1.5mm] 
& \quad +C \varepsilon^{-1} (\int_0^t \|e^N_r\|^2_{L_\omega^p L_x^2} dr )^{1/2},
\end{align*}
which implies that 
\begin{align*}
\begin{split}
\|e^N_t\|^2_{L_\omega^p L_x^2} 
& \le C\varepsilon^{-2} \lambda_N^{-\mu}(1+\|u^\varepsilon_0\|_{L_\omega^p \dot H^\mu}^2)+ C \varepsilon^{-2} (1+t)\int_0^t\|e^N_r\|^2_{L_\omega^p L_x^2}dr.
\end{split}
\end{align*}
Then, the Gronwall inequality, combined with the Weyl law, yields the assertion \eqref{en}.

Now assume that $F(u)={ \bf i} \alpha u$ and $G={\rm Id}$. Then
\begin{align*}
e^N_t=&(S_t^{\varepsilon, \alpha}-S_t^{\varepsilon, \alpha, N}) u^\varepsilon_0
-{\bf i} \varepsilon^{-1}\int_0^t (S_{t-r}^{\varepsilon, \alpha}-S_{t-r}^{\varepsilon, \alpha, N}) dW_r.
\end{align*} 
By the inequality \eqref{s-sn} and the condition \eqref{con-add}, 
\begin{align*}
\|e^N_t\|_{L_\omega^p L_x^2}
& \le \|(S_t^{\varepsilon, \alpha}-S_t^{\varepsilon, \alpha, N}) u^\varepsilon_0\|_{L_\omega^p L_x^2}
+\varepsilon^{-1} \|\int_0^t (S_{t-r}^{\varepsilon, \alpha}-S_{t-r}^{\varepsilon, \alpha, N}) dW_r\|_{L_\omega^p L_x^2} \\[1.5mm]
& \le e^{-\alpha t \varepsilon^{-1}} \lambda_N^{-\mu/2}\|u^\varepsilon_0\|_{L_\omega^p \dot H^\mu}
+ C \varepsilon^{-1} (\int_0^t \|S_r^{\varepsilon, \alpha}-S_r^{\varepsilon, \alpha, N}\|^2_{\LL_2^\mu} dr)^{1/2} \\[1.5mm]
& \le e^{-\alpha t \varepsilon^{-1}} \lambda_N^{-\mu/2}\|u^\varepsilon_0\|_{L_\omega^p \dot H^\mu}
+ C \varepsilon^{-1}  \lambda_N^{-\mu/2} (\int_0^t e^{-2 \alpha r/\varepsilon}dr)^{1/2},
\end{align*}
which shows \eqref{en+}. The completes the proof.
\end{proof}

\section{Spatio-temporal Full Discretization}
\label{sec5}

In this section, we investigate the full discretization of Eq. \eqref{semi-NLS}, spatially by the spectral Galerkin method in Section \ref{sec3} and temporally by the midpoint scheme. Let $T<M \in \nn_+$ and $\tau=T/M \in (0, 1)$ be the temporal step size.
Then, the midpoint-spectral Galerkin approximation of Eq. \eqref{semi-NLS} is to find 
\begin{equation}\label{full}
\begin{split}
u_{m+1}^{\varepsilon,N}=u_m^{\varepsilon,N}+&(\frac{{ \bf i}\varepsilon}{2}\Delta_N u_{m+1/2}^{\varepsilon,N}+{\bf i} \varepsilon^{-1}P_NF(u_{m+1/2}^{\varepsilon,N}))\tau
-{\bf i} \varepsilon^{-1}P_N G(u_m^{\varepsilon,N})\delta_m W,
\end{split}
\end{equation}
where $u_{m+1/2}:=(u_m+u_{m+1})/2$ and $\delta_m W:=W(t_{m+1})-W(t_m)$, $m \in \zz_M$. It is straightforward to see that
\begin{equation*}
\begin{split}
(I-\frac{{ \bf i}\varepsilon}{4}\Delta_N\tau)u_{m+1}^{\varepsilon,N}=&(I+\frac{{ \bf i}\varepsilon}{4}\Delta_N\tau)u_{m}^{\varepsilon,N}+{\bf i} \varepsilon^{-1}P_NF(u_{m+1/2}^{\varepsilon,N})\tau\\[1.5mm]
&-{\bf i} \varepsilon^{-1}P_N G(u_m^{\varepsilon,N})\delta_m W.
\end{split}
\end{equation*}
Denote by
$$
S_\tau:=S^{\varepsilon, N}_\tau:=(I-\frac{{ \bf i}\varepsilon}{4}\Delta_N\tau)^{-1}(I+\frac{{ \bf i}\varepsilon}{4}\Delta_N\tau),\quad 
T_\tau:=T^{\varepsilon, N}_\tau:=(I-\frac{{ \bf i}\varepsilon}{4}\Delta_N\tau)^{-1}.
$$
Then we have the full discrete mild formulation
\begin{equation} \label{mild-full}
\begin{split}
u_{m}^{\varepsilon,N}=S_\tau^mP_Nu_0^\varepsilon
&+ {\bf i} \varepsilon^{-1} \sum_{j=0}^{m-1} S_{\tau}^{m-j}T_\tau P_NF(u_{j+1/2}^{\varepsilon,N})\tau\\
&- {\bf i} \varepsilon^{-1} \sum_{j=0}^{m-1}  S_\tau^{m-j}T_\tau P_N G(u_j^{\varepsilon,N})\delta_j W.
\end{split}
\end{equation}

It is not difficult to show the well-posedness and moment's estimate of the midpoint-spectral Galerkin approximation \eqref{full} under Assumption \ref{ap-fg}.

\begin{lm}  \label{lm5.1}
Let $p \ge 2$, $\mu > 0$, $u^\varepsilon_0 \in L^p(\Omega; \dot H^\mu)$, and Assumption \ref{ap-fg} hold.
Then there exists $\tau_0 \in (0, 1)$ such that for any $\tau \in (0, \tau_0)$, the midpoint-spectral Galerkin approximation \eqref{full} is uniquely solved and there exists a positive constant $C$ such that 
\begin{align}\label{est-ujn}
\sup_{m \in \zz_M} \|u_m^{\varepsilon,N}\|_{L_\omega^p \dot H^\mu} 
\le C e^{CT \varepsilon^{-2}} (1+ \|u^\varepsilon_0\|_{L_\omega^p \dot H^\mu}).
\end{align}     
In particular, if $F(u)={ \bf i} \alpha u$ with $\alpha>0$ and $G={\rm Id}$ such that \eqref{con-add} holds, then for any $m \in \zz_M$, 
\begin{align} \label{est-ujn+} 
\|u_m^{\varepsilon,N}\|_{L_\omega^p \dot H^\mu}
& \le e^{-\alpha t_m \varepsilon^{-1}} \|u^\varepsilon_0\|_{L_\omega^p \dot H^\mu}
+C (\alpha\varepsilon)^{-1/2}.
\end{align}  
\qed
\end{lm}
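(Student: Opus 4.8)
The plan is to mirror the continuous-time argument of Theorem~\ref{tm-well} at the discrete level, using the full discrete mild formulation \eqref{mild-full} together with the uniform boundedness of the discrete propagators $S_\tau$ and $T_\tau$. First I would verify that for $\tau$ small enough the scheme \eqref{full} is uniquely solvable: rewriting one step as $(I-\tfrac{{\bf i}\varepsilon}{4}\Delta_N\tau)u_{m+1}^{\varepsilon,N}=(I+\tfrac{{\bf i}\varepsilon}{4}\Delta_N\tau)u_m^{\varepsilon,N}+{\bf i}\varepsilon^{-1}P_NF(u_{m+1/2}^{\varepsilon,N})\tau-{\bf i}\varepsilon^{-1}P_NG(u_m^{\varepsilon,N})\delta_mW$, the operator $I-\tfrac{{\bf i}\varepsilon}{4}\Delta_N\tau$ is boundedly invertible on $V_N$ (it is a Fourier multiplier with symbol $1-\tfrac{{\bf i}\varepsilon}{4}\lambda_k\tau$, bounded below by $1$ in modulus), so the equation for $u_{m+1}^{\varepsilon,N}$ becomes a fixed-point problem with contraction constant $O(\tau\varepsilon^{-1}L_1)$; choosing $\tau_0$ with $\tau_0\varepsilon^{-1}L_1<1/2$ gives existence, uniqueness and measurability of each iterate. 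The key operator bound, analogous to the isometry \eqref{s}, is that $S_\tau$ is unitary on every $\dot H^\theta$ (its symbol $\tfrac{1+{\bf i}\varepsilon\lambda_k\tau/4}{1-{\bf i}\varepsilon\lambda_k\tau/4}$ has modulus one) and $\|T_\tau\|_{\LL(\dot H^\theta)}\le 1$; consequently $\|S_\tau^{m-j}T_\tau P_N\|_{\LL(\dot H^\mu;\dot H^\mu)}\le 1$ uniformly in $j,m,N,\tau$.

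Granting these bounds, I would take $\|\cdot\|_{L_\omega^p\dot H^\mu}$-norms in \eqref{mild-full}, apply the Minkowski inequality to the drift sum and the Burkholder--Davis--Gundy inequality (in discrete form, i.e.\ for the martingale $\sum_j S_\tau^{m-j}T_\tau P_N G(u_j^{\varepsilon,N})\delta_j W$) to the stochastic sum, and use Assumption~\ref{ap-fg} to get
\begin{align*}
\|u_m^{\varepsilon,N}\|_{L_\omega^p\dot H^\mu}
&\le \|u_0^\varepsilon\|_{L_\omega^p\dot H^\mu}
+L_1\varepsilon^{-1}\tau\sum_{j=0}^{m-1}(1+\|u_{j+1/2}^{\varepsilon,N}\|_{L_\omega^p\dot H^\mu})\\
&\quad+CL_2\varepsilon^{-1}\Big(\tau\sum_{j=0}^{m-1}(1+\|u_j^{\varepsilon,N}\|_{L_\omega^p\dot H^\mu})^2\Big)^{1/2}.
\end{align*}
Squaring, using $\|u_{j+1/2}^{\varepsilon,N}\|\le\tfrac12(\|u_j^{\varepsilon,N}\|+\|u_{j+1}^{\varepsilon,N}\|)$ and Young's inequality, one arrives at a discrete Gronwall inequality of the form $a_m^2\le C T\varepsilon^{-2}(1+\|u_0^\varepsilon\|_{L_\omega^p\dot H^\mu}^2)+CT\varepsilon^{-2}\tau\sum_{j=0}^{m}a_j^2$ with $a_m:=\|u_m^{\varepsilon,N}\|_{L_\omega^p\dot H^\mu}$; the implicit $a_{m+1}$ term on the right is absorbed into the left by taking $\tau_0$ small (so that $CT\varepsilon^{-2}\tau_0<1/2$). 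The discrete Gronwall lemma then yields $a_m^2\le CT\varepsilon^{-2}(1+\|u_0^\varepsilon\|_{L_\omega^p\dot H^\mu}^2)\,e^{CT\varepsilon^{-2}}$, i.e.\ \eqref{est-ujn} after taking square roots and absorbing constants.

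For the damped linear case $F(u)={\bf i}\alpha u$, $G={\rm Id}$, the scheme linearizes and I would instead absorb the $-\alpha\varepsilon^{-1}{\rm Id}$ term into the propagator: with $R_\tau:=(I-\tfrac{{\bf i}\varepsilon}{4}\Delta_N\tau+\tfrac{\alpha}{2\varepsilon}\tau)^{-1}(I+\tfrac{{\bf i}\varepsilon}{4}\Delta_N\tau-\tfrac{\alpha}{2\varepsilon}\tau)$ one checks the symbol has modulus $\le\big(1-\tfrac{\alpha\tau}{2\varepsilon}\big)/\big(1+\tfrac{\alpha\tau}{2\varepsilon}\big)\le e^{-\alpha\tau/\varepsilon}$ for $\tau<2\varepsilon/\alpha$, so $\|R_\tau^m\|\le e^{-\alpha t_m\varepsilon^{-1}}$; the resulting mild formula has $u_m^{\varepsilon,N}=R_\tau^m P_N u_0^\varepsilon-{\bf i}\varepsilon^{-1}\sum_{j=0}^{m-1}R_\tau^{m-j}\tilde T_\tau P_N\delta_j W$, and the BDG inequality with $\|(-\Delta)^{\mu/2}Q^{1/2}\|_{HS}=L_2<\infty$ gives the stochastic term bound $C\varepsilon^{-1}L_2\big(\tau\sum_j e^{-2\alpha t_{m-j}/\varepsilon}\big)^{1/2}\le C\varepsilon^{-1}L_2(\alpha^{-1}\varepsilon)^{1/2}=C(\alpha\varepsilon)^{-1/2}$, which is \eqref{est-ujn+}. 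The main obstacle, and the place needing the most care, is the implicit nature of the midpoint evaluation: $F$ and the noise are evaluated at $u_{m+1/2}^{\varepsilon,N}$ and $u_m^{\varepsilon,N}$ while the recursion produces $u_{m+1}^{\varepsilon,N}$, so the a priori estimate and the discrete Gronwall argument must be set up to carry the as-yet-unbounded term $a_{m+1}$ on the right-hand side and absorb it — this is exactly what forces the smallness restriction $\tau<\tau_0$ and dictates the choice of $\tau_0$ in terms of $L_1$, $L_2$, $T$ and $\varepsilon$.
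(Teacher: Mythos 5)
Your treatment of unique solvability and of the uniform bound \eqref{est-ujn} is correct and follows exactly the route the paper intends (the paper omits the proof, pointing to the argument of Theorem \ref{tm-well}): unitarity of $S_\tau$ and contractivity of $T_\tau$ on every $\dot H^\theta$ play the role of the isometry \eqref{s}, the Minkowski and discrete Burkholder--Davis--Gundy inequalities give the pre-Gronwall estimate, and the implicit term coming from $u_{m-1/2}^{\varepsilon,N}$ is absorbed for $\tau<\tau_0(\varepsilon,L_1,L_2,T)$. No issue there.

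Your argument for the damped case \eqref{est-ujn+}, however, contains a genuine error. On the $k$-th mode the symbol of your propagator $R_\tau$ is $\frac{(1-a)+{\bf i}b_k}{(1+a)-{\bf i}b_k}$ with $a=\frac{\alpha\tau}{2\varepsilon}$ and $b_k=\frac{\varepsilon\tau\lambda_k}{4}$, and its modulus equals $\bigl(1-\frac{4a}{(1+a)^2+b_k^2}\bigr)^{1/2}$. This is \emph{not} bounded by $\frac{1-a}{1+a}$ (cross-multiplying, that would require $(1+a)^2\le(1-a)^2$); it tends to $1$ as $|b_k|\to\infty$, so high-frequency components of the initial datum are essentially undamped by the midpoint discretization of the zeroth-order term, and the claimed bound $\|R_\tau^m\|_{\LL(\dot H^\mu)}\le e^{-\alpha t_m\varepsilon^{-1}}$ fails. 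Your estimate of the stochastic convolution can still be salvaged, because the factor $|\tilde T_{\tau,k}|^2=((1+a)^2+b_k^2)^{-1}$ cancels the large numerator in the geometric sum $\tau\sum_j|\rho_k|^{2j}\le\tau\,\frac{(1+a)^2+b_k^2}{4a}$, leaving $\frac{\varepsilon}{2\alpha}$ uniformly in $k$ and hence the $C(\alpha\varepsilon)^{-1/2}$ term. But to obtain the decaying prefactor $e^{-\alpha t_m\varepsilon^{-1}}$ on $\|u_0^\varepsilon\|_{L_\omega^p\dot H^\mu}$ one must treat the damping exactly, i.e. use $e^{-\alpha\tau/\varepsilon}S_\tau$ as the one-step propagator --- which is what the paper implicitly does in Lemma \ref{lm5.4} of the Appendix, where $(S_\tau^{\alpha,\varepsilon})^k=e^{-\alpha t_k/\varepsilon}S_\tau^k$. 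With the literal midpoint treatment of $F(u)={\bf i}\alpha u$ your argument only yields $\|u_m^{\varepsilon,N}\|_{L_\omega^p\dot H^\mu}\le\|u_0^\varepsilon\|_{L_\omega^p\dot H^\mu}+C(\alpha\varepsilon)^{-1/2}$.
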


The proof of Lemma \ref{lm5.1} is similar to the same as in the proof of Theorem \ref{tm-well} and thus is omitted here. Now, we are in the position to show the approximate error between $u^{\varepsilon, N}(t_m)$ and $u_{m}^{\varepsilon, N}$.
\begin{tm} \label{tm-tau}
Let $p \ge 2$, $\mu > 0$, $u^\varepsilon_0 \in L^p(\Omega; \dot H^\mu)$, and Assumption \ref{ap-fg} hold.
Then there exists a positive constant $C$ such that  
\begin{align} \label{tau}
&\sup_{m \in \zz_M} \|u^{\varepsilon,N}(t_m)-u_{m}^{\varepsilon,N}\|_{L_\omega^p L_x^2}
\le C e^{C T \varepsilon^{-2}} (1+\|u_0^\varepsilon\|_{L_\omega^p \dot H^\mu}) \tau^{(\mu/3-1/2) \wedge 1/2}.
\end{align} 
Moreover, if $F(u)={ \bf i} \alpha u$ with $\alpha>0$ and $G={\rm Id}$ such that \eqref{con-add} holds, then 
\begin{align} \label{tau+} 
\sup_{m \in \nn} \|u^{\varepsilon,N}(t_m)-u_{m}^{\varepsilon,N}\|_{L_\omega^p L_x^2}
\le C(\|u^\varepsilon_0\|_{L_\omega^p \dot H^\mu}
+(\alpha \varepsilon)^{-1/2}) \tau^{(\mu/3-1/2) \wedge 1/2}.
\end{align} 	
\end{tm}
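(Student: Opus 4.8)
The plan is to estimate the error $\theta_m := u^{\varepsilon,N}(t_m) - u_m^{\varepsilon,N}$ by subtracting the full-discrete mild formulation \eqref{mild-full} from the semi-discrete mild formulation \eqref{mild-N} evaluated at $t=t_m$, and then apply a discrete Gronwall inequality. The error naturally splits into four pieces: (i) the discrepancy between the exact semigroup sampled at $t_m$ and the Crank--Nicolson rational approximation $S_\tau^m$, acting on $P_N u_0^\varepsilon$; (ii) the deterministic drift term, itself split into the semigroup-approximation error against the ``frozen'' integrand $F(u^{\varepsilon,N}(t_j))$, the quadrature/midpoint error coming from replacing the continuous integral by $\sum_j F(u^{\varepsilon,N}(t_{j+1/2}))\tau$ and evaluating at the midpoint, and the Lipschitz term $F(u^{\varepsilon,N}(t_{j+1/2})) - F(u_{j+1/2}^{\varepsilon,N})$; and (iii) the stochastic term, split into a semigroup-approximation error, a ``frozen coefficient'' error $\int_{t_j}^{t_{j+1}} [G(u^{\varepsilon,N}(r)) - G(u^{\varepsilon,N}(t_j))]\,\mathrm d W_r$ handled by the It\^o isometry and the H\"older estimate \eqref{hol} (adapted to the semidiscretization, which has the same form since $P_N$ is a bounded contraction), and the Lipschitz term $G(u^{\varepsilon,N}(t_j)) - G(u_j^{\varepsilon,N})$.

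The key quantitative input is the rate at which the rational function $r(z) = (1-z/2)^{-1}(1+z/2)$ approximates $e^z$ on the imaginary axis: for $z = \mathrm i \varepsilon \lambda_k \tau/2$ one has $|r(z)^m - e^{mz}| \lesssim (\varepsilon \lambda_k \tau)^{\beta} m^{?}$-type bounds, but since $|r(z)|=1$ for purely imaginary $z$ (the scheme is exactly energy-conserving on the linear part), the stability is automatic and the local error is $O((\varepsilon \lambda_k \tau)^3)$ per step, summing to $O(t_m (\varepsilon \lambda_k \tau)^2)$ globally when measured against $\dot H^\mu$-data — more precisely, using $|r(\mathrm i y)^m - e^{\mathrm i m y}| \le C \min(1, m y^3) \le C (my^3)^{\mu/3 \wedge 1}$ one extracts the factor $(\varepsilon^2 \lambda_k)^{\mu/3} \lambda_k^{-\mu/3}$ uniformly, which after invoking the regularity bound \eqref{reg} on $u^{\varepsilon,N}$ (equivalently Lemma \ref{lm5.1}) produces the exponent $\mu/3$. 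The competing $\tau^{1/2}$ comes from the stochastic increments: the frozen-coefficient term is only H\"older-$1/2$ in time, so no discretization can beat $\tau^{1/2}$ there; hence the final rate is $\tau^{(\mu/3 - 1/2)\wedge 1/2}$, where the shift by $-1/2$ in the first branch reflects that the $\dot H^\mu$-bound on $u^{\varepsilon,N}$ is the only regularity available and one $\tau^{1/2}$ must be ``spent'' on the stochastic convolution's quadrature while the remaining $\tau^{\mu/3 - 1/2}$ is squeezed from the extra smoothing; when $\mu$ is large this saturates at $\tau^{1/2}$.

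After bounding the four groups, I would arrive at an inequality of the schematic form
\begin{align*}
\|\theta_m\|_{L_\omega^p L_x^2}
\le C e^{CT\varepsilon^{-2}}(1+\|u_0^\varepsilon\|_{L_\omega^p \dot H^\mu})\,\tau^{(\mu/3-1/2)\wedge 1/2}
+ C\varepsilon^{-1}\tau \sum_{j=0}^{m-1}\|\theta_{j+1/2}\|_{L_\omega^p L_x^2}
+ C\varepsilon^{-1}\Bigl(\tau\sum_{j=0}^{m-1}\|\theta_j\|^2_{L_\omega^p L_x^2}\Bigr)^{1/2},
\end{align*}
where the Crank--Nicolson denominators $T_\tau$ are contractions on $V_N$ (indeed $\|T_\tau\|_{\LL(H)}\le 1$) so they cost nothing, and the $\delta_j W$ terms are controlled by Burkholder--Davis--Gundy plus the isometry property of $S_t^\varepsilon$ from the Appendix. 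Rearranging $\|\theta_{j+1/2}\| \le \tfrac12(\|\theta_j\|+\|\theta_{j+1}\|)$, squaring, absorbing the implicit $\|\theta_m\|$ on the left for $\tau$ small, and applying the discrete Gronwall lemma yields \eqref{tau}; the damped case \eqref{tau+} follows by repeating the argument with $S_t^\varepsilon$ replaced by $S_t^{\alpha,\varepsilon}$ and the rational approximation of $e^{-\alpha\tau/\varepsilon}$ (still a contraction), using \eqref{reg+}/\eqref{est-ujn+} in place of \eqref{reg}, so that the $e^{CT\varepsilon^{-2}}$ factor is replaced by the $\varepsilon$-explicit bound. The main obstacle is the careful bookkeeping in step (iii): proving that the purely imaginary rational-approximation error, when paired with only $\dot H^\mu$ regularity and with the stochastic integrand's mere $1/2$-H\"older temporal continuity, produces exactly the exponent $(\mu/3-1/2)\wedge 1/2$ rather than something worse — this requires the interpolation-type estimate $|r(\mathrm i y)^m - e^{\mathrm i my}|\le C(my^3)^{(\mu/3)\wedge 1}$ together with a summation $\tau\sum_j (t_m - t_j)^{-\gamma}$ that stays bounded, and the delicate point is that the stochastic convolution forces one to measure this in the square-integrated norm, splitting $\mu$ unevenly between the deterministic smoothing gain and the stochastic $\tau^{1/2}$.
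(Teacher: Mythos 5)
Your proposal follows essentially the same route as the paper: subtract the full-discrete mild formulation \eqref{mild-full} from the semi-discrete one \eqref{mild-N} at $t=t_m$, split into initial-data, drift, and stochastic-convolution terms, control the Crank--Nicolson rational approximation of the unitary group via $|x-\arctan x|\le C|x|^3$ (the paper's Lemma 5.4 and estimates \eqref{s-stau}--\eqref{s-stau2}), handle the frozen-coefficient terms with the H\"older bound \eqref{hol} and the moment bound \eqref{est-ujn}, and close with a discrete Gronwall argument after absorbing the implicit $\|\theta_m\|$-term for small $\tau$. The only cosmetic difference is your heuristic for the $-1/2$ shift in the exponent: in the paper it arises concretely because the per-interval semigroup errors in the stochastic convolution are summed in $\ell^1$ over the $m\sim\tau^{-1}$ subintervals (each contributing $\tau^{1/2+\mu/3\wedge 1}$ by \eqref{s-stau2}) rather than through the $\ell^2$ structure, but this is exactly the bookkeeping you flag as the delicate point, and it yields the same rate $\tau^{(\mu/3-1/2)\wedge 1/2}$.
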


\begin{proof}
Note that subtracting Eq. \eqref{mild-full} from Eq. \eqref{mild-N} with $t=t_m$ implies that
\begin{align*}
u^{\varepsilon,N}(t_m)-u_{m}^{\varepsilon,N}
&=(S_{t_m}-S_\tau^m) P_N  u_0^\varepsilon\\[1.5mm]
& \quad +{\bf i} \varepsilon^{-1}\sum_{j=0}^{m-1} \int_{t_j}^{t_{j+1}} [S_{t_m-r} P_N F(u_r)-S_{\tau}^{m-j}T_\tau P_N F(u_{j+1/2}^{\varepsilon,N})] dr\\[1.5mm]
& \quad -{\bf i} \varepsilon^{-1}\sum_{j=0}^{m-1} \int_{t_j}^{t_{j+1}}[S_{t_m-r} P_N G(u_r)-S_{\tau}^{m-j}T_\tau P_N G(u_j^{\varepsilon,N})] dW_r \\[1.5mm]
&:=I_\tau+II_\tau+III_\tau. 
\end{align*}
 
 For the first term $I_\tau$, we use the inequality \eqref{s-stau} in Appendix to derive
 \begin{align} \label{1tau}
\|I_\tau\|_{L_\omega^p L_x^2}
& \le C t_m^{\mu/6} \varepsilon^{\mu/2} \tau^{\mu/3} \|u_0^\varepsilon\|_{L_\omega^p \dot H^\mu}.
 \end{align}
 For the second term $II_\tau$, using the Minkovski inequality, the inequality \eqref{s-stau1} with $\alpha=0$, and the condition \eqref{ap-f}, we obtain
 \begin{align*}
\|II_\tau\|_{L_\omega^p L_x^2}
 & \le \varepsilon^{-1} \sum_{j=0}^{m-1} \int_{t_j}^{t_{j+1}} \|(S_{t_m-r}-S_{\tau}^{m-j}T_\tau) P_N F(u_{j+1/2}^{\varepsilon,N})\|_{L_\omega^p L_x^2}  dr\\[1.5mm]
 & \quad +\varepsilon^{-1} \sum_{j=0}^{m-1} \int_{t_j}^{t_{j+1}}\|(S_{t_m-r} P_N  [F(u_r)-F(u_{j+1/2}^{\varepsilon,N})] \|_{L_\omega^p L_x^2}  dr  \\[1.5mm]   
& \le \varepsilon^{-1} \sum_{j=0}^{m-1} \int_{t_j}^{t_{j+1}}  \|(S_{t_m-r}-S_\tau^{m-j} T_\tau)P_N\|_{\LL(\dot H^\mu, L^2)}
\|F(u_{j+1/2}^{\varepsilon,N})\|_{L_\omega^p \dot H^\mu}dr\\[1.5mm] 
 & \quad +\varepsilon^{-1} \sum_{j=0}^{m-1} \int_{t_j}^{t_{j+1}}\|F(u_r)-F(u_{j+1/2}^{\varepsilon,N}) \|_{L_\omega^p L_x^2}  dr  \\[1.5mm]
 & \le C t_m^\beta (1+\|u_j^{\varepsilon,N}\|_{L_\omega^p \dot H^\mu}) \tau^{(\mu/3) \wedge 1}
 +\varepsilon^{-1} \sum_{j=0}^{m-1} \int_{t_j}^{t_{j+1}}\|u_r-u_{j+1/2}^{\varepsilon,N} \|_{L_\omega^p L_x^2} dr.
 \end{align*} 
By the H\"older continuous of $u_t^\varepsilon$ in \eqref{hol}, we get 
 \begin{align*}
& \varepsilon^{-1} \sum_{j=0}^{m-1} \int_{t_j}^{t_{j+1}}
\|u_r-u_{j+1/2}^{\varepsilon,N}\|_{L_\omega^p L_x^2} dr\\[1.5mm] 
& \le C \varepsilon^{-1} \sum_{j=0}^{m-1} \int_{t_j}^{t_{j+1}}
\|u_r-(u_{t_j}+u_{t_{j+1}})/2\|_{L_\omega^p L_x^2} dr\\[1.5mm] 
& \quad + C \varepsilon^{-1} \sum_{j=0}^{m-1}  \|\frac{u_{t_j}-u_j^{\varepsilon,N}}2+\frac{u_{t_{j+1}}-u_{j+1}^{\varepsilon,N}}2\|_{L_\omega^p L_x^2} \tau \\[1.5mm] 
& \le C \varepsilon^{-1} \sum_{j=0}^{m-1} \int_{t_j}^{t_{j+1}}
\|u_r-(u_{t_j}+u_{t_{j+1}})/2\|_{L_\omega^p L_x^2} dr\\[1.5mm] 
& \quad + C \varepsilon^{-1} \sum_{j=0}^{m-1}  \|u_{t_j}-u_j^{\varepsilon,N}\|_{L_\omega^p L_x^2} \tau 
+ C \varepsilon^{-1} \sum_{j=0}^{m-1}  \|u_{t_{j+1}}-u_{j+1}^{\varepsilon,N}\|_{L_\omega^p L_x^2} \tau \\[1.5mm] 
& \le C\varepsilon^{-1}  e^{C t_m \varepsilon^{-2}} (1+\|u_0^\varepsilon\|_{L_\omega^p \dot H^\mu})\tau^{1/2}   \\[1.5mm] 
& \quad + C \varepsilon^{-1} \sum_{j=0}^{m-1}  \|u_{t_j}-u_j^{\varepsilon,N}\|_{L_\omega^p L_x^2} \tau 
+ C \varepsilon^{-1} \|u_{t_m}-u_m^{\varepsilon,N}\|_{L_\omega^p L_x^2} \tau,
\end{align*} 
which by combining with \eqref{est-ujn}, yields that  
\begin{equation} \label{2tau}
\begin{split}
\|II_\tau\|_{L_\omega^p L_x^2} 
& \le C e^{C t_m \varepsilon^{-2}} (1+\|u_0^\varepsilon\|_{L_\omega^p \dot H^\mu})\tau^{1/2}  \\[1.5mm] 
& \quad + C \varepsilon^{-1} \sum_{j=0}^{m-1}  \|u_{t_j}-u_j^{\varepsilon,N}\|_{L_\omega^p L_x^2} \tau 
+ C \varepsilon^{-1} \|u_{t_m}-u_m^{\varepsilon,N}\|_{L_\omega^p L_x^2} \tau. 
\end{split}
\end{equation}
 
Similarly, using the Minkovski inequality, the discrete Burkholder--Davis--Gundy inequality, the inequality \eqref{s-stau1} with $\alpha=0$, and the condition \eqref{ap-g}, we have
  \begin{equation}  \label{3tau}
  \begin{split}
&\|III_\tau\|_{L_\omega^p L_x^2}\\[1.5mm]
 & \le \varepsilon^{-1} (\sum_{j=0}^{m-1} \int_{t_j}^{t_{j+1}}\|(S_{t_m-r}-S_{\tau}^{m-j}T_\tau) P_N  G(u_j^{\varepsilon,N})\|^2_{L_\omega^p \LL_2^0}  dr)^{\frac12}  \\[1.5mm]
  & \quad +\varepsilon^{-1} (\sum_{j=0}^{m-1} \int_{t_j}^{t_{j+1}}\|S_{t_m-r} P_N  [G(u_r)- G(u_j^{\varepsilon,N})]\|^2_{L_\omega^p \LL_2^0}  dr)^{\frac12} \\[1.5mm]
 & \le \varepsilon^{-1} \sum_{j=0}^{m-1} (\int_{t_j}^{t_{j+1}}  \|(S_{t_m-r}-S_\tau^{m-j} T_\tau)P_N\|^2_{\LL(\dot H^\mu, L^2)}
\|G(u_{j+1/2}^{\varepsilon,N})\|^2_{L_\omega^p \LL_2^\mu}dr)^{1/2}  \\[1.5mm] 
 & \quad +\varepsilon^{-1}  (\sum_{j=0}^{m-1} \int_{t_j}^{t_{j+1}}\|G(u_r)-G(u_{j+1/2}^{\varepsilon,N}) \|^2_{L_\omega^p \LL_2^0}  dr)^{1/2}  \\[1.5mm]
 & \le C e^{C t_m \varepsilon^{-2}} (1+\|u_0^\varepsilon\|_{L_\omega^p \LL_2^\mu}) \tau^{(\mu/3-1/2) \wedge 1/2}    \\[1.5mm]
& \quad + C \varepsilon^{-1} \sum_{j=0}^{m-1}  \|u_{t_j}-u_j^{\varepsilon,N}\|_{L_\omega^p L_x^2} \tau 
+ C \varepsilon^{-1} \|u_{t_m}-u_m^{\varepsilon,N}\|_{L_\omega^p L_x^2} \tau.
\end{split}
 \end{equation} 

Combing \eqref{1tau}, \eqref{2tau} and \eqref{3tau}, we find
 \begin{align*} 
\|u^{\varepsilon,N}(t_m)-u_{m}^{\varepsilon,N}\|_{L_\omega^p L_x^2}
& \le C e^{C t_m \varepsilon^{-2}} (1+\|u_0^\varepsilon\|_{L_\omega^p \dot H^\mu})\tau^{(\mu/3-1/2) \wedge 1/2}  \\[1.5mm] 
& \quad + C \varepsilon^{-1} \sum_{j=0}^{m-1}  \|u_{t_j}-u_j^{\varepsilon,N}\|_{L_\omega^p L_x^2} \tau   \nonumber \\[1.5mm]
& \quad + C \varepsilon^{-1} (\sum_{j=0}^{m-1}  
\|u_{t_j}- u_j^{\varepsilon,N}\|^2_{L_\omega^p L_x^2} \tau )^{\frac12} \\[1.5mm]
& \quad + C \varepsilon^{-1} \|u_{t_m}-u_m^{\varepsilon,N}\|_{L_\omega^p L_x^2} \tau. 
 \end{align*}
 Therefore, for $\tau<C \varepsilon^{-1}$ which always holds for sufficiently small $\varepsilon$, we get 
 \begin{align*} 
\|u^{\varepsilon,N}(t_m)-u_{m}^{\varepsilon,N}\|^2_{L_\omega^p L_x^2}
& \le C e^{C t_m \varepsilon^{-2}} (1+\|u_0^\varepsilon\|_{L_\omega^p \dot H^\mu})^2 \tau^{2[(\mu/3-1/2) \wedge 1/2]}  \\[1.5mm]  
& \quad + C \varepsilon^{-1} \sum_{j=0}^{m-1}  
\|u_{t_j}- u_j^{\varepsilon,N}\|^2_{L_\omega^p L_x^2} \tau.
 \end{align*}
 The desired result \eqref{tau} is then obtained by the Gronwall inequality. 

Finally, to show \eqref{tau+}, it suffices to use the estimate \eqref{s-stau}, \eqref{s-stau1}, and \eqref{s-stau2} with $\alpha>0$, in addition to the previous arguments.
For instance, let us consider the stochastic convolution in the additive noise case where we apply \eqref{s-stau1} with $\alpha>0$: 
 \begin{align*}
\|III_\tau\|_{L_\omega^p L_x^2} 
& \le \varepsilon^{-1} (\sum_{j=0}^{m-1} \int_{t_j}^{t_{j+1}}\|S_{t_m-r}-S_{\tau}^{m-j}T_\tau\|^2_{L_\omega^p \LL_2^0}dr)^{\frac12}  \\[1.5mm]
& \le C e^{-\alpha t_m \varepsilon^{-1}} \tau^{(\mu/3-1/2) \wedge 1/2}.
 \end{align*} 
The situation is similar to the estimates of the other terms.  This finishes the proof of \eqref{tau+}.
\end{proof}

Combining Theorems \ref{tm-en} and \ref{tm-tau} will then give the following strong convergence rate between the exact solution of Eq. \eqref{semi-NLS} and its full discretization \eqref{full}.
\begin{tm}
Let $p \ge 2$, $\mu \ge 0$, $u^\varepsilon_0 \in L^p(\Omega; \dot H^\mu)$, and Assumption \ref{ap-fg} hold.
Then  
\begin{align} \label{entau}
&\sup_{m \in \zz_M} \|u^{\varepsilon}(t_m)-u_{m}^{\varepsilon,N}\|_{L_\omega^p L_x^2}
\le C e^{C T \varepsilon^{-2}} (1+\|u_0^\varepsilon\|_{L_\omega^p \dot H^\mu}) (N^{-\mu/d}+\tau^{(\mu/3-1/2) \wedge 1/2}).
\end{align} 
If $F(u)={ \bf i} \alpha u$ with $\alpha>0$ and $G={\rm Id}$ such that \eqref{con-add} holds, then there exists a positive constant $C$ such that   
\begin{align} \label{entau+} 
\sup_{m \in \nn} \|u^{\varepsilon}(t_m)-u_{m}^{\varepsilon,N}\|_{L_\omega^p L_x^2}
\le C (\|u^\varepsilon_0\|_{L_\omega^p \dot H^\mu}
+ (\alpha \varepsilon)^{-1/2}) (N^{-\mu/d}+\tau^{(\mu/3-1/2) \wedge 1/2}).
\end{align} 	
\end{tm}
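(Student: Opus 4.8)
The plan is to obtain \eqref{entau} and \eqref{entau+} directly from the two error estimates already established, by inserting the fully spatially discrete solution $u^{\varepsilon,N}(t_m)$ as an intermediate term and using the triangle inequality
\[
u^{\varepsilon}(t_m)-u_{m}^{\varepsilon,N}
=\bigl(u^{\varepsilon}(t_m)-u^{\varepsilon,N}(t_m)\bigr)
+\bigl(u^{\varepsilon,N}(t_m)-u_{m}^{\varepsilon,N}\bigr),
\qquad m\in\zz_M.
\]
First I would bound the first summand in the $L_\omega^p L_x^2$-norm by the spatial semi-discretization estimate \eqref{en} of Theorem \ref{tm-en}, which contributes a term of order $C e^{CT\varepsilon^{-2}}(1+\|u_0^\varepsilon\|_{L_\omega^p\dot H^\mu})N^{-\mu/d}$ uniformly in $m\in\zz_M$. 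Then I would bound the second summand by the temporal estimate \eqref{tau} of Theorem \ref{tm-tau}, which contributes a term of order $C e^{CT\varepsilon^{-2}}(1+\|u_0^\varepsilon\|_{L_\omega^p\dot H^\mu})\tau^{(\mu/3-1/2)\wedge 1/2}$. Adding the two bounds, taking the supremum over $m\in\zz_M$, and absorbing all constants into a single generic $C$ yields \eqref{entau}. (When $\mu=0$ the inequality is trivial: the left side is bounded by $\|u^\varepsilon(t_m)\|_{L_\omega^p L_x^2}+\|u_m^{\varepsilon,N}\|_{L_\omega^p L_x^2}=O(e^{CT\varepsilon^{-2}})$, while $\tau^{-1/2}\ge 1$.)

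For the damped linear case $F(u)={\bf i}\alpha u$ with $\alpha>0$ and $G={\rm Id}$ satisfying \eqref{con-add}, I would apply the same decomposition but invoke instead the sharpened bounds \eqref{en+} and \eqref{tau+}. These replace the prefactor $e^{CT\varepsilon^{-2}}(1+\|u_0^\varepsilon\|_{L_\omega^p\dot H^\mu})$ by $\|u_0^\varepsilon\|_{L_\omega^p\dot H^\mu}+C(\alpha\varepsilon)^{-1/2}$, which is uniform in $m$ (hence in the horizon $T$) thanks to the damping factor $e^{-\alpha t_m\varepsilon^{-1}}\le 1$. Summing the two contributions gives \eqref{entau+}, with the supremum legitimately extended to all $m\in\nn$.

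There is essentially no analytic obstacle: the statement is the superposition of Theorems \ref{tm-en} and \ref{tm-tau}, and the heavy lifting (the Gronwall arguments, the smoothing and isometry properties of the semigroups, the Burkholder--Davis--Gundy estimates) has already been carried out there. The only point deserving attention is the bookkeeping of the prefactors — one must check that the single quantity $\|u_0^\varepsilon\|_{L_\omega^p\dot H^\mu}$ simultaneously dominates both the spatial and the temporal error contributions, which it does since $u_0^{\varepsilon,N}=P_Nu_0^\varepsilon$ and the spectral projection $P_N$ is a contraction on every $\dot H^\theta$ — and, in the damped case, that the implied constants are genuinely $T$-independent so that the uniform-in-$m\in\nn$ conclusion is valid.
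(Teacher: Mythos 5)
Your proposal is correct and is exactly the paper's argument: the theorem is stated as an immediate consequence of combining Theorems \ref{tm-en} and \ref{tm-tau} via the triangle inequality through the intermediate semi-discrete solution $u^{\varepsilon,N}(t_m)$, which is precisely your decomposition. Your extra remarks on the $\mu=0$ case and on the $T$-uniformity of the constants in the damped case are sensible bookkeeping that the paper leaves implicit.
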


\begin{rk} 
We can formulate the following meshing strategy based on \eqref{entau} and \eqref{entau+}. 
For instance, let $F(u)={ \bf i} \alpha u$ with $\alpha>0$, $G={\rm Id}$ such that \eqref{con-add} holds, and $\delta>0$ be the desired error bound. Then
$\sup_{m \in \nn} \|u^{\varepsilon}(t_m)-u_{m}^{\varepsilon,N}\|_{L_\omega^p L_x^2} \le \delta$ holds provided $\tau=N^{-\frac{2\mu}d}$ with $\mu \ge 3$ or $\tau=N^{-\frac{6\mu}{d(2s-3)}}$ with $\mu \in [2, 3)$ and 
\begin{align*}
N^{-\frac{2\mu}d}/\varepsilon \le C \delta^2.
\end{align*} 
\end{rk}

\section*{Appendix}

In this appendix, we collect several regularity results and error estimates of the Schr\"odinger semigroup and its discretizations.

\begin{lm}
Let $\mu \ge 0, \rho \in [0, 1]$, $u \in \dot H^\mu$, $v \in \dot H^\rho$, $\alpha \in \rr$, and $t \ge 0$.
Then  
\begin{align}  
\|S_t^{\alpha,\varepsilon} u\|_\mu & = e^{-\alpha t \varepsilon^{-1}} \|u\|_\mu, \label{s} \\[1.5mm]
\|(S^\varepsilon_t-{\rm Id}) v\| & \le C \varepsilon^{\rho} t^{\rho} \|v\|_{2\rho}, \label{s-con} \\[1.5mm]
\|(S^{\alpha,\varepsilon}_t-{\rm Id}) v\| & \le C \varepsilon^{\rho} t^{\rho} e^{-\alpha t \varepsilon^{-1}} \|v\|_{2\rho}+\|v\|^2. \label{s-con+}
\end{align} 
\end{lm}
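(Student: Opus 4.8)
The plan is to exploit that all three estimates are diagonal in the eigenbasis $\{e_k\}_{k\ge1}$ of $A$, so each reduces to an elementary scalar inequality for a Fourier symbol. Writing $v=\sum_k v_k e_k$ with $v_k=\<v,e_k\>$, the semigroup $S_t^\varepsilon=\exp(\frac{{\bf i}\varepsilon}{2}t\Delta)$ multiplies the $k$-th mode by $e^{-{\bf i}\varepsilon t\lambda_k/2}$, while $S_t^{\alpha,\varepsilon}=e^{-\alpha t\varepsilon^{-1}}S_t^\varepsilon$ multiplies it by $e^{-\alpha t\varepsilon^{-1}}e^{-{\bf i}\varepsilon t\lambda_k/2}$. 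First I would dispatch \eqref{s}: since the symbol $e^{-{\bf i}\varepsilon t\lambda_k/2}$ is unimodular, $S_t^\varepsilon$ preserves the modulus of every Fourier coefficient, hence commutes with $(-A)^{\mu/2}$ and is unitary on $L_x^2$, so it is an isometry of $\dot H^\mu$ for every $\mu\ge0$; factoring the positive scalar $e^{-\alpha t\varepsilon^{-1}}$ out of $\|\cdot\|_\mu$ then gives the identity.

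For \eqref{s-con} the only ingredient I need is the scalar bound
\[
|e^{{\bf i}\theta}-1|=2|\sin(\theta/2)|\le\min\{2,|\theta|\}\le 2^{1-\rho}|\theta|^\rho,\qquad \theta\in\rr,\ \rho\in[0,1],
\]
which follows from $\min\{1,x\}\le x^\rho$ for $x\ge0$. Applying it with $\theta=-\varepsilon t\lambda_k/2$, squaring, and summing over $k$ yields
\[
\|(S_t^\varepsilon-{\rm Id})v\|^2=\sum_k\bigl|e^{-{\bf i}\varepsilon t\lambda_k/2}-1\bigr|^2|v_k|^2\le C\varepsilon^{2\rho}t^{2\rho}\sum_k\lambda_k^{2\rho}|v_k|^2\le C\varepsilon^{2\rho}t^{2\rho}\|v\|_{2\rho}^2,
\]
and a square root finishes \eqref{s-con}. (This step also shows that the natural regularity class for $v$ is $\dot H^{2\rho}$, the right-hand side being $+\infty$ otherwise.)

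For \eqref{s-con+} I would use the decomposition
\[
S_t^{\alpha,\varepsilon}-{\rm Id}=e^{-\alpha t\varepsilon^{-1}}(S_t^\varepsilon-{\rm Id})+(e^{-\alpha t\varepsilon^{-1}}-1){\rm Id},
\]
estimate the first summand by $e^{-\alpha t\varepsilon^{-1}}$ times \eqref{s-con}, and the second by $|e^{-\alpha t\varepsilon^{-1}}-1|\,\|v\|$; for $\alpha\ge0$ one has $e^{-\alpha t\varepsilon^{-1}}\le1$ and $|e^{-\alpha t\varepsilon^{-1}}-1|\le1$, so the total is at most $C\varepsilon^\rho t^\rho\|v\|_{2\rho}+\|v\|$, which gives \eqref{s-con+}.

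I do not expect a genuine obstacle here: once the diagonalization is written down, everything is elementary Fourier analysis. The only two points that call for a little care are keeping the constant in $\min\{2,|\theta|\}\le 2^{1-\rho}|\theta|^\rho$ uniform over $\rho\in[0,1]$, and --- if one wants \eqref{s-con+} for $\alpha<0$ as well --- tracking the exponentially growing prefactor $e^{-\alpha t\varepsilon^{-1}}=e^{|\alpha|t\varepsilon^{-1}}$ so that it multiplies exactly the right quantities. Both are bookkeeping rather than difficulty.
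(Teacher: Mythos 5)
Your proof is correct and follows essentially the same route as the paper: diagonalization in the eigenbasis of $A$, the elementary bound $|e^{{\bf i}\theta}-1|=2|\sin(\theta/2)|\le 2|\theta/2|^{\rho}$, and peeling off the scalar damping factor $e^{-\alpha t\varepsilon^{-1}}$ for the third estimate. Your side remarks are also on target: the bound \eqref{s-con+} as stated only holds for $\alpha\ge 0$ (for $\alpha<0$ the prefactor $|1-e^{-\alpha t\varepsilon^{-1}}|$ grows), and the trailing term $\|v\|^2$ there should read $\|v\|$, which is exactly what both your decomposition and the paper's own scalar estimate actually produce.
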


\begin{proof}
By the definitions of $S_t^{\alpha,\varepsilon}$ and $\|\cdot\|_\mu$-norm, in combination with the increasing property of $\lambda_k$ with respect to $k \in \nn_+$, we have 
\begin{align*} 
\|S_t^{\alpha,\varepsilon}v\|_\mu^2
& =\sum_{k=1}^\infty \lambda_k^\mu \langle e_k, v\rangle^2|e^{(\frac{{ \bf i}\varepsilon}{2} \lambda_k-\frac\alpha \varepsilon) t}|^2
=e^{-\frac{2\alpha}\varepsilon t} \sum_{k=1}^\infty \lambda_k^\mu \langle e_k, v\rangle^2 
= e^{-\frac{2\alpha}\varepsilon t} \|v\|_\mu^2,
\end{align*}
which shows \eqref{s}.

Similarly,
\begin{align*} 
\|(S^{\alpha,\varepsilon}_t-{\rm Id}) v\|^2 
& =\sum_{k=1}^\infty \langle e_k, v\rangle^2|e^{(\frac{{ \bf i}\varepsilon}{2} \lambda_k-\frac\alpha \varepsilon) t}-1|^2.
\end{align*} 
It is clear that for any $\beta \in [0, 1]$, 
\begin{align} \label{exp}
|e^{{\bf i} x}-e^{{\bf i} y}| =2 |\sin[ (x-y)/2]| \le 2 |x-y|^\beta,
\quad \forall~ x, y \in \rr.
\end{align}
Then 
\begin{align*} 
\|(S^\varepsilon_t-{\rm Id}) v\|^2 
& \le \left|\frac{\varepsilon}{2} t\right|^{2\rho} 
\sum_{k=1}^\infty|\lambda_k|^{2\rho} \langle e_k, v\rangle^2 
\le C \varepsilon^{2 \rho} t^{2 \rho}  \|v\|_{2\rho}^2,
\end{align*}
which shows \eqref{s-con}.
To show \eqref{s-con+}, we just need to note that 
\begin{align*} 
|e^{(\frac{{ \bf i}\varepsilon}{2} \lambda_k-\frac\alpha \varepsilon) t}-1|
\le e^{-\frac{\alpha}\varepsilon t} |e^{\frac{{ \bf i}\varepsilon}{2} \lambda_kt}-1|
+|1-e^{-\frac{\alpha}\varepsilon t}|
\le 2e^{-\frac{\alpha}\varepsilon t} \left|\frac{\varepsilon}{2} t \lambda_k\right|^{\rho}+1,
\end{align*}
and then use the previous argument.
\end{proof}

\begin{lm}
Let $\mu \ge 0$, $v \in \dot H^\mu$, $\alpha \in \rr$, and $t \ge 0$.
Then  
\begin{align}  
&\|(S_t^{\alpha,\varepsilon}-S_t^{\alpha,\varepsilon, N}) v\| 
\le e^{-\alpha t \varepsilon^{-1}} \lambda_N^{-\mu/2}\|v\|_\mu. \label{s-sn}
\end{align} 
\end{lm}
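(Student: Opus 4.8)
The plan is to diagonalize both semigroups in the orthonormal eigenbasis $\{e_k\}_{k\ge 1}$ of $A$ and to reduce the claim to a high-frequency tail estimate. First I would expand $v=\sum_{k\ge 1}\langle e_k,v\rangle e_k$ and, using the spectral calculus, record that $S_t^{\alpha,\varepsilon}v=\sum_{k\ge 1}e^{(\frac{{\bf i}\varepsilon}{2}\lambda_k-\frac{\alpha}{\varepsilon})t}\langle e_k,v\rangle e_k$, while $S_t^{\alpha,\varepsilon,N}v=P_N S_t^{\alpha,\varepsilon}v$ retains exactly the terms with $k\le N$. Hence $(S_t^{\alpha,\varepsilon}-S_t^{\alpha,\varepsilon,N})v$ is supported on the frequencies $k>N$, and Parseval's identity gives $\|(S_t^{\alpha,\varepsilon}-S_t^{\alpha,\varepsilon,N})v\|^2=\sum_{k>N}|e^{(\frac{{\bf i}\varepsilon}{2}\lambda_k-\frac{\alpha}{\varepsilon})t}|^2\langle e_k,v\rangle^2=e^{-2\alpha t/\varepsilon}\sum_{k>N}\langle e_k,v\rangle^2$, precisely as in the computation already used to prove \eqref{s}, since the oscillatory factor $e^{{\bf i}\varepsilon\lambda_k t/2}$ has modulus one and $|e^{-\alpha t/\varepsilon}|=e^{-\alpha t/\varepsilon}$ uniformly in $k$.

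Next I would bring in the regularity of $v$. Because the eigenvalues are listed in increasing order, $\lambda_k\ge\lambda_{N+1}\ge\lambda_N$ for every $k>N$, so $1\le\lambda_N^{-\mu}\lambda_k^\mu$ on the tail, whence $\sum_{k>N}\langle e_k,v\rangle^2\le\lambda_N^{-\mu}\sum_{k>N}\lambda_k^\mu\langle e_k,v\rangle^2\le\lambda_N^{-\mu}\|v\|_\mu^2$. Combining this with the previous display and taking square roots yields $\|(S_t^{\alpha,\varepsilon}-S_t^{\alpha,\varepsilon,N})v\|\le e^{-\alpha t\varepsilon^{-1}}\lambda_N^{-\mu/2}\|v\|_\mu$, which is \eqref{s-sn}.

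There is essentially no obstacle here: the estimate is a direct consequence of the spectral decomposition of $A$. The only point deserving a word of care is that $P_N$ is the orthogonal projection onto $V_N$, so it commutes with $S_t^{\alpha,\varepsilon}$ (both being functions of $A$), which is what makes the cross terms drop out of the Parseval identity and leaves only a clean sum over $k>N$; the exponential prefactor and the loss of the $\varepsilon$-dependence in the remaining factor then come for free.
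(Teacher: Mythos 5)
Your proof is correct and follows essentially the same route as the paper: diagonalize in the eigenbasis, observe that the difference lives on frequencies $k>N$ so Parseval gives the factor $e^{-2\alpha t/\varepsilon}\sum_{k>N}\langle e_k,v\rangle^2$, and then use $\lambda_k\ge\lambda_N$ on the tail to extract $\lambda_N^{-\mu}\|v\|_\mu^2$. No changes needed.
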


\begin{proof}
By the definitions of $S_t^{\alpha,\varepsilon}$, $S_t^{\alpha,\varepsilon, N}$, and $\|\cdot\|_\mu$-norm, in combination with the increasing property of $\lambda_k$ with respect to $k \in \nn_+$, we have 
\begin{align*} 
\|(S_t^{\alpha,\varepsilon}-S_t^{\alpha,\varepsilon, N}) v\|^2
& =\sum_{k=N+1}^\infty\langle e_k, v\rangle^2|e^{(\frac{{ \bf i}\varepsilon}{2} \lambda_k-\frac\alpha \varepsilon) t}|^2
=e^{-\frac{2\alpha}\varepsilon t} \sum_{k=N+1}^\infty \langle e_k, v\rangle^2  \\
&\le e^{-\frac{2\alpha}\varepsilon t} \lambda_N^{-\mu}\sum_{k=N+1}^{\infty}|\lambda_k|^\mu\langle e_k, v\rangle^2
\le e^{-\frac{2\alpha}\varepsilon t} \lambda_N^{-\mu} \|v\|_\mu^2, 
\end{align*}
which implies the assertion.
\end{proof}

\begin{lm}\label{lm5.4}
Let $\mu \in (0, 6]$, $\alpha \in \rr$, and $k \in \nn_+$.
Then  
\begin{align}   \label{s-stau}
&\|(S^{\alpha,\varepsilon}(t_k)-(S_\tau^{\alpha,\varepsilon})^k) \|_{\LL(\dot H^{\mu}, L^2)} 
\le C e^{-\alpha t_k/\varepsilon} t_k^{\mu/6} \varepsilon^{\mu/2} \tau^{\mu/3}. 
\end{align} 
\end{lm}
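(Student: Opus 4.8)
The plan is to reduce everything to a mode-by-mode scalar estimate. First I would peel off the exponential damping: the exact and discrete propagators both carry the common scalar factor $e^{-\alpha t/\varepsilon}$, namely $S^{\alpha,\varepsilon}_{t_k}=e^{-\alpha t_k/\varepsilon}S^\varepsilon_{t_k}$ and $(S^{\alpha,\varepsilon}_\tau)^k=e^{-\alpha t_k/\varepsilon}S_\tau^{\,k}$ (with $S_\tau$ the undamped midpoint operator), so that $S^{\alpha,\varepsilon}_{t_k}-(S^{\alpha,\varepsilon}_\tau)^k=e^{-\alpha t_k/\varepsilon}\bigl(S^\varepsilon_{t_k}-S_\tau^{\,k}\bigr)$. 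It therefore suffices to prove the case $\alpha=0$, i.e. $\|S^\varepsilon_{t_k}-S_\tau^{\,k}\|_{\LL(\dot H^\mu,L^2)}\le C\,t_k^{\mu/6}\varepsilon^{\mu/2}\tau^{\mu/3}$. Since $S^\varepsilon_{t_k}$ and $S_\tau$ are simultaneously diagonalized by the eigenbasis $\{e_k\}$, this operator norm equals $\sup_k|\sigma_k|\,\lambda_k^{-\mu/2}$, where $\sigma_k$ is the difference of the two scalar symbols on the $k$-th mode; so the whole problem becomes a uniform-in-$k$ bound on $|\sigma_k|$.

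Next I would compute the symbols explicitly. On the $k$-th mode $S^\varepsilon_{t_k}$ acts as $e^{2{\bf i}kq_k}$ with $q_k:=\varepsilon\lambda_k\tau/4$, while $S_\tau$ acts as the Cayley symbol $(1+{\bf i}q_k)(1-{\bf i}q_k)^{-1}$, which has modulus $1$ and argument $2\arctan q_k$; hence $S_\tau^{\,k}$ acts as $e^{2{\bf i}k\arctan q_k}$. Consequently
\[
|\sigma_k|=\bigl|e^{2{\bf i}kq_k}-e^{2{\bf i}k\arctan q_k}\bigr|=2\bigl|\sin\!\bigl(k(q_k-\arctan q_k)\bigr)\bigr|\le 2k\,|q_k-\arctan q_k|.
\]
Using the elementary bound $|x-\arctan x|=\bigl|\int_0^x t^2(1+t^2)^{-1}\,dt\bigr|\le|x|^3/3$, together with the trivial estimate $|\sigma_k|\le 2$, I obtain
\[
|\sigma_k|\le\min\Bigl(2,\ \tfrac{2k}{3}q_k^3\Bigr)=\min\Bigl(2,\ \tfrac{1}{96}\,t_k\,\varepsilon^3\lambda_k^3\,\tau^2\Bigr),
\]
where I used $k\tau=t_k$.

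Finally I would interpolate. Since $\mu/6\in(0,1]$ one has $\min(2,X)\le 2^{1-\mu/6}X^{\mu/6}$ for all $X\ge0$, whence
\[
|\sigma_k|\,\lambda_k^{-\mu/2}\le C\bigl(t_k\,\varepsilon^3\lambda_k^3\,\tau^2\bigr)^{\mu/6}\lambda_k^{-\mu/2}=C\,t_k^{\mu/6}\varepsilon^{\mu/2}\tau^{\mu/3},
\]
which is independent of $k$; taking the supremum over $k$ and restoring the factor $e^{-\alpha t_k/\varepsilon}$ finishes the proof. The only genuinely delicate steps are (i) recognizing that the midpoint scheme's symbol is exactly the unimodular $e^{2{\bf i}\arctan q_k}$, so the $k$-step error collapses to a single sine of the accumulated phase mismatch $k(q_k-\arctan q_k)$ — this is where the second-order (local $O(\tau^3)$) consistency of the midpoint rule enters — and (ii) choosing the interpolation exponent $\mu/6$ so that the cubic gain $\lambda_k^{3\mu/6}=\lambda_k^{\mu/2}$ exactly absorbs the loss $\lambda_k^{-\mu/2}$ coming from the $\dot H^\mu\to L^2$ norm. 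This also explains the hypothesis $\mu\le6$: for $\mu>6$ the interpolation exponent would exceed $1$ and the rate would saturate at $\tau^2$.
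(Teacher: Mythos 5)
Your proof is correct and follows essentially the same route as the paper: diagonalize in the eigenbasis, identify the midpoint symbol as the unimodular $e^{2{\bf i}\arctan(\varepsilon\tau\lambda/4)}$, bound the accumulated phase error by $Ck|q-\arctan q|\le Ckq^3$, and interpolate with exponent $\mu/6$ so that $\lambda^{3\mu/6}$ cancels $\lambda^{-\mu/2}$. The only cosmetic difference is that the paper applies the fractional power directly through $|e^{{\bf i}x}-e^{{\bf i}y}|\le 2|x-y|^\beta$ while you first take the $\beta=1$ bound together with the trivial bound $2$ and then use $\min(2,X)\le 2^{1-\mu/6}X^{\mu/6}$; these are equivalent.
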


\begin{proof}
By the definitions of $S_t^{\alpha,\varepsilon}$, $S_t^{\alpha,\varepsilon, N}$, and $\|\cdot\|_\mu$-norm, in combination with the increasing property of $\lambda_k$ with respect to $k \in \nn_+$, we have 
\begin{align*} 
& \|(S^{\alpha,\varepsilon}(t_k)-(S_\tau^{\alpha,\varepsilon})^k) \|_{\LL(\dot H^\mu, L^2)}^2 \\[1.5mm]
& = \sup_{\|v\|_\mu=1} \|(S^{\alpha,\varepsilon}(t_k)-(S_\tau^{\alpha,\varepsilon})^k) v\|^2 \\[1.5mm]
& = \sup_{\|v\|_\mu=1} \sum_{m=1}^\infty \<(S^{\alpha,\varepsilon}(t_k)-(S_\tau^{\alpha,\varepsilon})^k) v, e_m\>^2 \\[1.5mm]
& =e^{-2\alpha t_k/\varepsilon} \sup_{\|v\|_\mu=1} \sum_{m=1}^\infty |e^{\frac{{\bf i} \varepsilon t_k \lambda_m}2} - (\frac{1+{\bf i} \varepsilon \tau \lambda_m/4}{1-{\bf i} \varepsilon \tau \lambda_m/4})^k |^2 \<v,  e_m\>^2 \\[1.5mm]
& \le e^{-2\alpha t_k/\varepsilon} (\sup_{m \in \nn_+} |e^{\frac{{\bf i} \varepsilon t_k \lambda_m}2} - (\frac{1+{\bf i} \varepsilon \tau \lambda_m/4}{1-{\bf i} \varepsilon \tau \lambda_m/4})^k |^2 \lambda_m^{-\mu})
(\sup_{\|v\|_\mu=1} \sum_{k=1}^\infty \lambda_k^\mu \<v,  e_k\>^2 ) \\[1.5mm]
& =e^{-2\alpha t_k/\varepsilon} \sup_{m \in \nn_+}\left|e^{{\bf i} \varepsilon t_k \lambda_m/2} - e^{2 {\bf i} k \arctan(\varepsilon \tau \lambda_m/4)}\right|^2 \lambda_m^{-\mu}.
\end{align*}
Using the inequality \eqref{exp}, we have   
\begin{equation*}
\begin{split} 
&\|(S^{\alpha,\varepsilon}(t_k)-(S_\tau^{\alpha,\varepsilon})^k) \|_{\LL(\dot H^\mu, L^2)}^2\\[1.5mm]
&\le e^{-2\alpha t_k/\varepsilon} \sup_{k \in \nn_+} (2k)^{2 \beta} |\varepsilon \tau \lambda_k/4- \arctan(\varepsilon \tau \lambda_k/4)|^{2 \beta} \lambda_k^{-\mu}.
\end{split}
\end{equation*}
By the elementary inequality $|x-\arctan(x)| \le C |x|^3$, $x \in \rr$, we get 
\begin{align*} 
\|(S^{\alpha,\varepsilon}(t_k)-(S_\tau^{\alpha,\varepsilon})^k) \|_{\LL(\dot H^\mu, L^2)} 
& \le C e^{-\alpha t_k/\varepsilon} t_k^\beta \tau^{2 \beta} \varepsilon^{3 \beta}  \sup_{k \in \nn_+}\lambda_k^{-(\mu/2-3 \beta)},
\end{align*}
which shows \eqref{s-stau} with $\beta=\mu/6$.  
\end{proof}

\begin{lm}
Let $\beta \in [1/3, 1]$, $j \in \nn_+$, and $r \in [t_{j-1}, t_j]$.
Then  
\begin{align}   \label{s-stau-}
& \|S_{-r}-S_{\tau}^{-j}T_\tau\|_{\mathcal L(\dot H^{6 \beta}; L^2)}
\le C t_j^\beta \tau^{(2 \beta) \wedge 1} \varepsilon. 
\end{align} 
Let $s \ge 2$, $u \in L^p(\Omega; H^1)$, $\alpha \in \rr$, and $t \ge 0$.
Then  
\begin{align}   
& \int_{t_j}^{t_{j+1}} \|(S_{t_m-r}-S_{\tau}^{m-j}T_\tau) P_N\|_{\mathcal L(\dot H^\mu; L^2)}  dr   
\le C e^{-\alpha t_m \varepsilon^{-1}} t_m^{s/6} \varepsilon \tau^{1+(\mu/3) \wedge 1}, \label{s-stau1}\\
& (\int_{t_j}^{t_{j+1}} \|(S_{t_m-r}-S_{\tau}^{m-j}T_\tau) P_N\|^2_{\mathcal L(\dot H^\mu; L^2)} dr)^\frac12    
\le C e^{-\alpha t_m \varepsilon^{-1}} t_m^{s/6} \varepsilon \tau^{1/2+(\mu/3) \wedge 1}.\label{s-stau2}
\end{align} 
\end{lm}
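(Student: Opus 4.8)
The plan is to diagonalize all the operators in the Laplacian eigenbasis $\{e_k\}$, reduce the three assertions to scalar multiplier bounds, and then feed these into the smoothing estimate \eqref{s-con} and Lemma \ref{lm5.4}. On $e_k$ the semigroup $S_t^\varepsilon$ acts by multiplication with $e^{-\mathbf{i}\varepsilon t\lambda_k/2}$, whereas $S_\tau$ acts by the Cayley/Pad\'e factor $(1-\mathbf{i}x_k)/(1+\mathbf{i}x_k)$ and $T_\tau$ by $(1+\mathbf{i}x_k)^{-1}$, where $x_k:=\varepsilon\tau\lambda_k/4$; hence $S_\tau^{-j}T_\tau$ acts with modulus $(1+x_k^2)^{-1/2}$ and argument equal to the linearization $(2j-1)x_k$ up to the error $(2j-1)(x_k-\arctan x_k)$. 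Since for a Fourier multiplier with symbol $(m_k)$ one has $\|\cdot\|_{\mathcal L(\dot H^\theta;L^2)}=\sup_k|m_k|\lambda_k^{-\theta/2}$, proving \eqref{s-stau-} amounts to a uniform-in-$k$ bound on the weighted symbol of $S_{-r}-S_\tau^{-j}T_\tau$ with weight $\lambda_k^{-3\beta}$ (here and below the operators in \eqref{s-stau-} are understood on $V_N$, consistent with the projector $P_N$ appearing in \eqref{s-stau1}--\eqref{s-stau2}).

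The two elementary tools are \eqref{exp}, i.e.\ $|e^{\mathbf{i}a}-e^{\mathbf{i}b}|\le 2|a-b|^\gamma$ for every $\gamma\in[0,1]$, and the inequalities $|x-\arctan x|\le C|x|^3$ and $0\le 1-(1+x^2)^{-1/2}\le\tfrac12\min(x^2,1)$. To prove \eqref{s-stau-} I would compare the exact propagator at the cell midpoint $t_{j-1/2}$, at which the leading phases cancel exactly because $\varepsilon t_{j-1/2}\lambda_k/2=(2j-1)x_k$, and decompose the $k$-th symbol of $S_{-r}-S_\tau^{-j}T_\tau$ into a quadrature-offset term $e^{\mathbf{i}\varepsilon r\lambda_k/2}-e^{\mathbf{i}(2j-1)x_k}$, an arctangent defect $e^{\mathbf{i}(2j-1)x_k}-e^{\mathbf{i}(2j-1)\arctan x_k}$, and a modulus defect $\bigl(1-(1+x_k^2)^{-1/2}\bigr)e^{\mathbf{i}(2j-1)\arctan x_k}$. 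Applying \eqref{exp} with exponent $\gamma=\beta$ to the arctangent defect together with $|x-\arctan x|\le C|x|^3$, its weighted contribution is $\le C(2j-1)^\beta x_k^{3\beta}\lambda_k^{-3\beta}=C(2j-1)^\beta(\varepsilon\tau/4)^{3\beta}$, which is already $\lambda_k$-free when $x_k\le1$, and for $x_k>1$ one bounds $\lambda_k^{-3\beta}\le(\varepsilon\tau/4)^{3\beta}$ directly; the modulus defect is treated identically and produces $C(\varepsilon\tau)^{3\beta}$. Since $(2j-1)\tau\le 2t_j$ and $\beta\ge\tfrac13$ forces $\varepsilon^{3\beta}\le\varepsilon$ and $\tau^{3\beta}\le\tau^{(2\beta)\wedge1}$, all of these are $\le C t_j^\beta\tau^{(2\beta)\wedge1}\varepsilon$. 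The remaining quadrature-offset term, where $|r-t_{j-1/2}|\le\tau/2$, is absorbed via \eqref{s-con} with $\rho=1$: since $\mu=6\beta\ge2$,
\[
\|(S_{-r}-S_{-t_{j-1/2}})P_N\|_{\mathcal L(\dot H^{6\beta};L^2)}\le\|S_{t_{j-1/2}-r}^\varepsilon-{\rm Id}\|_{\mathcal L(\dot H^2;L^2)}\le C\varepsilon|r-t_{j-1/2}|,
\]
an $O(\varepsilon\tau)$ quantity whose integral over one time cell supplies the extra power of $\tau$ needed below.

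Finally, \eqref{s-stau1} and \eqref{s-stau2} follow from \eqref{s-stau-} and Lemma \ref{lm5.4} by the factorizations $S_{t_m-r}=S_{t_m}^\varepsilon S_{-r}^\varepsilon$ and $S_\tau^{m-j}T_\tau=S_\tau^m(S_\tau^{-j}T_\tau)$: since $S_{t_m}^\varepsilon$ is a $\dot H^\mu$-isometry and $S_\tau^m$ an $L^2$-isometry on $V_N$,
\[
\|(S_{t_m-r}-S_\tau^{m-j}T_\tau)P_N\|_{\mathcal L(\dot H^\mu;L^2)}\le\|(S_{t_m}^\varepsilon-S_\tau^m)P_N\|_{\mathcal L(\dot H^\mu;L^2)}+\|(S_{-r}^\varepsilon-S_\tau^{-j}T_\tau)P_N\|_{\mathcal L(\dot H^\mu;L^2)};
\]
the first term is $\le C t_m^{\mu/6}\varepsilon^{\mu/2}\tau^{\mu/3}$ by Lemma \ref{lm5.4}, which because $\mu\ge2$ gives $\varepsilon^{\mu/2}\le\varepsilon$ and $\tau^{\mu/3}\le\tau^{(\mu/3)\wedge1}$ is $\le C t_m^{\mu/6}\varepsilon\tau^{(\mu/3)\wedge1}$, while the second term is \eqref{s-stau-}; integrating in $r$ over $[t_j,t_{j+1}]$, respectively taking the $L^2_r$-norm, inserts the prefactor $\tau$, respectively $\tau^{1/2}$, and the factor $e^{-\alpha t_m/\varepsilon}$ is carried along unchanged since it commutes with the multipliers and has modulus $e^{-\alpha t_m/\varepsilon}$. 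The main obstacle is the frequency bookkeeping inside the proof of \eqref{s-stau-}: one must check that the supremum over $\lambda_k\le\lambda_N$ lands precisely on $\tau^{(2\beta)\wedge1}\varepsilon$ rather than the crude $\varepsilon^{3\beta}\tau^{3\beta}$, in particular at the crossover $x_k\sim1$ and at the endpoints $\beta=\tfrac13,\tfrac12$; it is exactly the standing hypothesis $\beta\ge\tfrac13$ (equivalently $\mu\ge2$) that rescues both the comparison $\varepsilon^{3\beta}\le\varepsilon$ and the use of \eqref{s-con} with the full exponent $\rho=1$ for the quadrature-offset piece.
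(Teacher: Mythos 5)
Your proof is correct and rests on the same core ingredients as the paper's: reduction to Fourier--multiplier bounds in the eigenbasis, the phase inequality \eqref{exp}, the arctangent defect $|x-\arctan x|\le C|x|^{3}$, and the standing restriction $\mu=6\beta\ge 2$ to absorb the terms that are linear in $\lambda_k$. Where you differ is the decomposition of \eqref{s-stau-}: the paper compares $S_{-r}$ with the exact flow at the right endpoint $t_j$, then with $S_\tau^{-j}$, and finally peels off ${\rm Id}-T_\tau$, whose symbol is only $O(x_k)$ and hence costs a factor $\lambda_k^{1-\mu/2}$; you compare at the cell midpoint $t_{j-1/2}$, where the exact phase $(2j-1)x_k$ is exactly the linearization of the composite discrete phase $(2j-1)\arctan x_k$ of $S_\tau^{-j}T_\tau$, so $T_\tau$ enters only through the modulus defect $1-(1+x_k^2)^{-1/2}=O(x_k^2\wedge 1)$ and the $\mu\ge 2$ requirement is isolated in the $O(\varepsilon\tau\lambda_k)$ quadrature-offset term handled by \eqref{s-con} with $\rho=1$. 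Your passage to \eqref{s-stau1}--\eqref{s-stau2} via the splitting $(S_{t_m}-S_\tau^{m})S_{-r}+S_\tau^{m}(S_{-r}-S_\tau^{-j}T_\tau)$, isometry of $S_{-r}$ on $\dot H^\mu$ and of $S_\tau^m$ on $V_N\subset L^2$, plus Lemma \ref{lm5.4} for the first piece, is actually more rigorous than the paper's, which asserts $\|(S^{\alpha}_{t_m-r}-S_\tau^{\alpha,m-j}T_\tau)P_N\|=e^{-\alpha t_m/\varepsilon}\|S_{-r}-S_\tau^{-j}T_\tau\|$ as an identity even though the time arguments do not match without a relabeling $r\mapsto t_m-r$, $j\mapsto m-j$. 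Two small points to tighten: the modulus defect is not handled ``identically'' to the arctangent defect --- it carries no $(2j-1)^\beta$ factor and needs the short case split $x_k\lessgtr 1$ to land on $(\varepsilon\tau)^{3\beta\wedge 2}\le C\varepsilon\tau^{(2\beta)\wedge 1}$ --- and the quadrature-offset piece $C\varepsilon\tau$ already satisfies the pointwise bound \eqref{s-stau-} because $(2\beta)\wedge 1\le 1$, so the additional powers $\tau$ and $\tau^{1/2}$ in \eqref{s-stau1}--\eqref{s-stau2} come solely from integrating over the cell, not from that term.
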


\begin{proof} 
Since $S_\tau$ is isometric in $L^2$, we have 
\begin{align*}   
& \|S_{-r}-S_{\tau}^{-j}T_\tau\|_{\mathcal L(\dot H^\mu; L^2)} \\[1.5mm]
& \le \|S_{-r}-S_{-t_j}\|_{\mathcal L(\dot H^\mu; L^2)}
+ \|S_{-t_j}-S_{\tau}^{-j}\|_{\mathcal L(\dot H^\mu; L^2)}
+ \|S_{\tau}^{-j}({\rm Id}-T_\tau)\|_{\mathcal L(\dot H^\mu; L^2)} \\[1.5mm]
& = \|S_{-r}-S_{-t_j}\|_{\mathcal L(\dot H^\mu; L^2)}
+ \|S_{-t_j}-S_{\tau}^{-j}\|_{\mathcal L(\dot H^\mu; L^2)}
+ \|{\rm Id}-T_\tau\|_{\mathcal L(\dot H^\mu; L^2)}.
\end{align*}
As in the proof of Lemma \ref{lm5.4}, it holds that
\begin{align*}  
& \|S_{-r}-S_{-t_j}\|_{\mathcal L(\dot H^\mu; L^2)} \\[1.5mm]
& \le \sup_{k \in \nn_+} |e^{{\bf i} \varepsilon r \lambda_k/2} - e^{2 {\bf i} j \arctan(\varepsilon \tau \lambda_k/4)}| \lambda_k^{-\mu/2} \\[1.5mm]
& \le \sup_{k \in \nn_+} |e^{{\bf i} \varepsilon r \lambda_k/2} - e^{{\bf i} \varepsilon t_j \lambda_k/2}| \lambda_k^{-\mu/2}
+ \sup_{k \in \nn_+} |e^{{\bf i} \varepsilon t_j \lambda_k/2} - e^{2 {\bf i} j \arctan(\varepsilon \tau \lambda_k/4)}| \lambda_k^{-\mu/2} \\[1.5mm] 
& \le \sup_{k \in \nn_+} |\varepsilon \lambda_k (r-t_j)| \lambda_k^{-\mu/2}
+\sup_{k \in \nn_+} (2j)^\beta |\varepsilon \tau \lambda_k/4 - \arctan(\varepsilon \tau \lambda_k/4)|^\beta \lambda_k^{-\mu/2} \\[1.5mm]
& \le \tau \varepsilon \sup_{k \in \nn_+} \lambda_k^{-(\mu/2-1)}
+ C t_j^\beta \tau^{2 \beta} \varepsilon^{3 \beta}.
\end{align*}
Similarly,
\begin{align*}  
\|S_{-t_j}-S_\tau^{-j}\|_{\mathcal L(\dot H^\mu; L^2)} 
\le C t_j^\beta \tau^{2 \beta} \varepsilon^{3 \beta},
\end{align*}
and 
\begin{align*}  
\|{\rm Id}-T_\tau\|_{\mathcal L(\dot H^\mu; L^2)} 
& \le \sup_{k \in \nn_+}|1 - (1-{\bf i} \varepsilon \tau \lambda_k/4)^{-1}|  \lambda_k^{-\mu/2} 
\le \frac14 \tau \varepsilon \sup_{k \in \nn_+} \lambda_k^{-(\mu/2-1)}.
\end{align*}
Combining the above estimates, equality can be derived from the equality \eqref{s-stau-}.

Finally, as $S_t$, $t \in \rr$, is isometric in $L^2$, we have 
\begin{align*}   
\|(S_{t_m-r}^\alpha-S_{\tau}^{\alpha, m-j}T_\tau) P_N \|_{\mathcal L(\dot H^\mu; L^2)}
&  =e^{-\alpha t_m \varepsilon^{-1}}\|S_{-r}-S_{\tau}^{-j}T_\tau\|_{\mathcal L(\dot H^\mu; L^2)} \\[1.5mm]
& \le C e^{-\alpha t_m \varepsilon^{-1}} t_j^\beta \tau^{(2 \beta) \wedge 1} \varepsilon.
\end{align*}
which shows \eqref{s-stau1} and \eqref{s-stau2}.

\end{proof}



\bibliographystyle{amsalpha}
\bibliography{semi_classical} 
 
\end{document}